\documentclass[a4paper,10pt,reqno]{amsart}


\addtolength{\textwidth}{.5cm}


\usepackage[utf8]{inputenc}
\usepackage[english]{babel}
\usepackage{lmodern}
\usepackage[T1]{fontenc}
\usepackage{amssymb, amsmath, amsthm}
\usepackage[Symbol]{upgreek}
\usepackage{xy}
\xyoption{all}
%
%

%
%
%


\def\umono{\ar@{_{(}->}[u]}
\def\uumono{\ar@{_{(}->}[uu]}

\def\lmono{\ar@{_{(}->}[l]}
\def\llmono{\ar@{_{(}->}[ll]}

\DeclareFontFamily{U}{MnSymbolC}{}
\DeclareFontShape{U}{MnSymbolC}{m}{n}{
   <-6>  MnSymbolC5
  <6-7>  MnSymbolC6
  <7-8>  MnSymbolC7
  <8-9>  MnSymbolC8
  <9-10> MnSymbolC9
 <10-12> MnSymbolC10
 <12->   MnSymbolC12}{}
\DeclareFontShape{U}{MnSymbolC}{b}{n}{
   <-6>  MnSymbolC-Bold5
  <6-7>  MnSymbolC-Bold6
  <7-8>  MnSymbolC-Bold7
  <8-9>  MnSymbolC-Bold8
  <9-10> MnSymbolC-Bold9
 <10-12> MnSymbolC-Bold10
 <12->   MnSymbolC-Bold12}{}
\DeclareSymbolFont{MnSyC}{U}{MnSymbolC}{m}{n}
\SetSymbolFont{MnSyC}{bold}{U}{MnSymbolC}{b}{n}
\DeclareMathSymbol{\lefthalfcup}{\mathord}{MnSyC}{183}
\DeclareMathSymbol{\righthalfcup}{\mathord}{MnSyC}{184}
\DeclareMathSymbol{\lefthalfcap}{\mathord}{MnSyC}{185}
\DeclareMathSymbol{\righthalfcap}{\mathord}{MnSyC}{186}
\DeclareMathSymbol{\medsquare}{\mathord}{MnSyC}{106}

%
%
\newcommand*{\set}[2]{\left\{#1\;\middle\vert\;#2\right\}} 

\newcommand*{\defas}{\mathrel{\mathrel{\mathop:}=}}


%
%
\def\letters{a,b,c,d,e,f,g,h,i,j,k,l,m,n,o,p,q,r,s,t,u,v,w,x,y,z}
\def\Letters{A,B,C,D,E,F,G,H,I,J,K,L,M,N,O,P,Q,R,S,T,U,V,W,X,Y,Z}
\makeatletter
\@for \@l:=\Letters \do{%
 \expandafter\edef\csname\@l bb\endcsname{\noexpand\ensuremath{\noexpand\mathbb{\@l}}}%
 \expandafter\edef\csname\@l bf\endcsname{{\noexpand\bf \@l}}%
 \expandafter\edef\csname\@l cal\endcsname{\noexpand\ensuremath{\noexpand\mathcal{\@l}}}%
 \expandafter\edef\csname\@l eu\endcsname{\noexpand\ensuremath{\noexpand\EuScript{\@l}}}%
 \expandafter\edef\csname\@l frak\endcsname{\noexpand\ensuremath{\noexpand\mathfrak{\@l}}}%
 \expandafter\edef\csname\@l rm\endcsname{{\noexpand\rm \@l}}%
 \expandafter\edef\csname\@l scr\endcsname{\noexpand\ensuremath{\noexpand\mathscr{\@l}}}%
}
\@for \@l:=\letters \do{%
 \expandafter\edef\csname\@l bf\endcsname{{\noexpand\bf \@l}}%
 \expandafter\edef\csname\@l frak\endcsname{\noexpand\ensuremath{\noexpand\mathfrak{\@l}}}%
 \expandafter\edef\csname\@l rm\endcsname{{\noexpand\rm \@l}}%
}
\makeatother

\def\OmegaB{{\Upomega}}


%
%


%
%
%


%

%

\newcommand*{\hocolim}{\operatornamewithlimits{hocolim}}

\newcommand*{\Fib}{\mathop{\rm Fib}\nolimits}
\newcommand*{\Fibset}{\mathop{\mathcal Fib}\nolimits}

\newcommand*{\Ker}{\mathop{\rm Ker}\nolimits}

\renewcommand*{\coprod}{\amalg}
%
%
%
%

\let\dummycoprod\coprod
\renewcommand*{\coprod}{\mathbin{\dummycoprod}}


\theoremstyle{plain}
\newtheorem{thm}{Theorem}

\newtheorem{theorem}{Theorem}[section]
\newtheorem{proposition}[theorem]{Proposition}
\newtheorem{corollary}[theorem]{Corollary}

\newtheorem{lemma}[theorem]{Lemma}

\theoremstyle{definition}
\newtheorem{definition}[theorem]{Definition}
\newtheorem{example}[theorem]{Example}

\newtheorem{point}[theorem]{}



\title[Homotopy excision and cellularity]{Homotopy excision and cellularity}

\author[W. Chach\'olski]{Wojciech Chach\'olski}
\author[J. Scherer]{J\'er\^ome Scherer}
\author[Kay Werndli]{Kay Werndli}

\address{Department of Mathematics\\ KTH Stockholm
Lindstedtsv\" agen 25 \\ 10044 Stockholm \\ Sweden}
\email{wojtek@math.kth.se}

\address{Department of Mathematics\\ EPFL Lausanne \\
Station 8 \\1015 Lausanne \\ Switzerland }
\email{jerome.scherer@epfl.ch, kay.werndli@epfl.ch}


\thanks{W. Chach\'olski was partially supported by G\"oran Gustafsson Stiftelse and VR grants. J. Scherer was
partially supported by MTM201020622, and UNAB104E378 ``Una manera de hacer Europa''. K. Werndli was
supported by Swiss NSF doctoral grant 200020 149118.}

\subjclass[2000]{Primary 55P65; Secondary 55U35, 55P35, 55P40, 18A30}

\begin{document}


\begin{abstract}
Consider a push-out diagram of spaces $C \leftarrow A \rightarrow B$, construct the
homotopy push-out, and then the homotopy pull-back of the diagram one gets by
forgetting the initial object $A$. We compare the difference
between $A$ and this homotopy pull-back.
This difference is measured in terms of the homotopy fibers of the original maps.
Restricting our attention to the connectivity of these maps, we recover the classical
Blakers-Massey Theorem.
\end{abstract}


\maketitle


\section*{Introduction}
\label{sec intro}
The way  spaces are often studied in homotopy theory  is by decomposing and approximating them using
simpler and possibly better understood pieces. This is typically done in two ways. One way is via cellular 
approximations, where basic building blocks are assembled together using homotopy push-outs.    The other 
way is by glueing basic building blocks using homotopy pull-backs to form Postnikov or Bousfield-Kan type 
completion towers. These two approaches let us focus and extract different types of information about a given 
space.  It is not uncommon that one has some knowledge about    one approximation and needs to understand 
properties of the space detected by the other approximation. For example in~\cite{CDFS2014}, the objective is 
to  describe cellularity properties of Postnikov  sections and spaces in Farjoun's modified Bousfield-Kan tower. 
This is done via a generalization of the ``Bousfield Key Lemma'', \cite{B2}. In this process it is essential to understand 
and quantify  differences between  homotopy push-outs and pull-backs.  Estimating the difference just in connectivity, 
as is given in the classical Blakers-Massey theorem~\cite{MR0044836} and its generalizations by  Brown and 
Loday, \cite{MR872255}, Ellis and Steiner \cite{MR897011},
and  Goodwillie in \cite{MR93i:55015} (see also the recent treatment in \cite{cubical} by Munson and Voli\'{c}), 
is  not enough for these purposes. One needs to quantify this difference in terms 
of so called cellular and acyclic relations: recall~\cite{MR1408539,Farjoun95} that a space $A$ {\it kills\/} $X$ (which we denote
by $X>A$) if  $X$ can be  constructed out of $A$ by means of  homotopy push-outs, telescopes, and extensions by fibrations.
Our main result here  is the following ``inequality'' which plays an essential role in~\cite{CDFS2014}.
\medskip

\begin{thm}\label{main theoremBM}
Consider a commutative square:
\[\xymatrix@C=1.5pc@R=1.5pc{
 A \ar[r]^{f} \ar[d]_{g} & B \ar[d]\\
 C \ar[r] & D}
\]
with total fiber~$\mathcal T$  and push-out fiber $\mathcal R$ (see~\ref{pt totalfiber}). Then
$\mathcal T>\OmegaB \Fibset(f)\ast\OmegaB \Fibset(g) \cup \OmegaB \mathcal R$.
\end{thm}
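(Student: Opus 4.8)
The plan is to separate out the homotopy‑excision content by reducing to the case where the square is itself a homotopy push‑out, and then to treat that case by iterating Chach\'olski's fibre--cofibre (Ganea‑type) construction.

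\emph{Reduction to a push‑out square.} Form the homotopy push‑out $P:=B\cup^h_AC$ of $B\xleftarrow{f}A\xrightarrow{g}C$, so that $\mathcal R=\Fibset(P\to D)$ and the comparison map $A\to B\times^h_DC$ factors as $A\to B\times^h_PC\to B\times^h_DC$. Write $\mathcal T_0$ for the total fibre of the push‑out square $A\to B$, $A\to C$, $\to P$; then $\mathcal T_0=\Fibset(A\to B\times^h_PC)$. Applying $\Fibset$ to the above composite gives a fibre sequence $\mathcal T_0\to\mathcal T\to\Fibset\bigl(B\times^h_PC\to B\times^h_DC\bigr)$, and the last term is $\OmegaB\mathcal R$: a fibre of $B\times^h_PC\to B\times^h_DC$ retracts, after contracting the $B$‑ and $C$‑coordinates, onto the homotopy fibre of the map of loop spaces $\Omega P\to\Omega D$ induced by $P\to D$, i.e.\ onto $\OmegaB\Fibset(P\to D)=\OmegaB\mathcal R$. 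Thus $\mathcal T$ is the total space of a fibration over $\OmegaB\mathcal R$ with fibre $\mathcal T_0$. Since $>$ is closed under extensions by fibrations, and both $\OmegaB\Fibset(f)\ast\OmegaB\Fibset(g)$ and $\OmegaB\mathcal R$ are visibly $>\OmegaB\Fibset(f)\ast\OmegaB\Fibset(g)\cup\OmegaB\mathcal R$, the theorem will follow from the push‑out case
\[
\mathcal T_0>\OmegaB\Fibset(f)\ast\OmegaB\Fibset(g)\qquad\text{(when $D=P$).}
\]
The $\cup\,\OmegaB\mathcal R$ summand is therefore exactly the defect measuring the failure of the original square to be a homotopy push‑out.

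\emph{The push‑out case.} After replacing $f$ by a weakly equivalent cofibration, use Ganea's fibre--cofibre construction to present $B$ as a telescope $A=A_0\hookrightarrow A_1\hookrightarrow\cdots$ in which $A_{k+1}=A_k\cup_{F_k}CF_k$ is obtained by coning off a fibrewise copy of $F_k:=\Fibset(A_k\to B)$, so $F_0=\Fibset(f)$, $F_{k+1}\simeq F_k\ast\OmegaB$ (loops on $B$), and $\hocolim_kA_k\simeq B$. Pushing these cell attachments forward along $A_k\to C_k:=C\cup_AA_k$ realises $P$ as a telescope over $C$, and the span $B\leftarrow A_k\to C_k$ has homotopy push‑out $P$ for every $k$. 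Writing $\tau_k$ for the total fibre of the square $[A_k\to B,\ A_k\to C_k,\ \to P]$, we have $\tau_0=\mathcal T_0$, $\tau_\infty\simeq\ast$, and -- by the identity expressing the homotopy fibre of a map of squares as a total fibre -- the homotopy fibre of $\tau_k\to\tau_{k+1}$ is the total fibre of the homotopy push‑out square obtained from $A_k\to C_k$ by coning off $F_k$. One then needs a controlled description of the total fibre of a square one of whose legs is a single cone attachment: such a total fibre is built, by homotopy push‑outs and fibrations, from the join of (a space $>\OmegaB\Fibset(f)$ produced by that cell) with $\OmegaB\Fibset(g)$, the extra loop on $\Fibset(g)$ arising because the attaching map of the coned cell is nullhomotopic once the cone is attached. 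Running these descriptions up the telescope from $\tau_\infty\simeq\ast$, one finds that $\mathcal T_0$ is built from spaces each of which is $>\OmegaB\Fibset(f)\ast\OmegaB\Fibset(g)$; using $X\ast Y\simeq\Sigma(X\wedge Y)$ (and, if convenient, the model $\OmegaB\Fibset(f)\ast\OmegaB\Fibset(g)\simeq\Fibset(\Fibset(f)\vee\Fibset(g)\to\Fibset(f)\times\Fibset(g))$) to recognise the joins, the telescope and fibration closure of $>$ yields $\mathcal T_0>\OmegaB\Fibset(f)\ast\OmegaB\Fibset(g)$.

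\emph{Where the difficulty lies.} The reduction step and the final combination via closure properties of $>$ are formal once the fibration $\mathcal T\to\OmegaB\mathcal R$ with fibre $\mathcal T_0$ is in place. The substantial work is the push‑out case: identifying precisely what is glued on in one fibre--cofibre step when the push‑out is carried along, recognising that piece as built from the join $\OmegaB\Fibset(f)\ast\OmegaB\Fibset(g)$, and controlling the assembly along the (possibly transfinite) telescope. Moreover all of this must be carried out with \emph{fibrewise} fibres and loops in place of the ordinary ones -- which is why $\Fibset(-)$ and $\OmegaB$, rather than $\hofib$ and $\Omega$, appear in the statement -- so that the argument stays valid when $B$, $C$ and $D$ are not simply connected.
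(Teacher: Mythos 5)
Your reduction of the general square to the homotopy push-out case is essentially the paper's own argument: the comparison map $A\to B\times^h_D C$ is factored through $B\times^h_Q C$ (with $Q$ the homotopy push-out), the fiber set of the second map is identified with a subset of $\OmegaB\mathcal R$, and closure of fiber sets under composition (Lemma~\ref{lemma closure}) finishes. That part is correct, modulo writing out carefully why the fibers of $B\times^h_Q C\to B\times^h_D C$ are loops on fibers of $Q\to D$.

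The genuine gap is in the push-out case, which is where all the content of the theorem lives (it is Theorem~\ref{thm BMsquare} and occupies Sections~\ref{sec horfibers}--\ref{sec maintheorem} of the paper). Your Ganea-telescope argument \emph{asserts}, rather than proves, exactly the statement at issue: that the total fiber of the square obtained from a single fiber--cofiber step is built from the join $\OmegaB\Fibset(f)\ast\OmegaB\Fibset(g)$. The sentence about ``the extra loop arising because the attaching map is nullhomotopic once the cone is attached'' names the Blakers--Massey phenomenon without supplying a mechanism for it; in the paper this is precisely the role of the cofibration estimate of Proposition~\ref{prop cof} (via the James map and Lemma~\ref{lemma loop-suspension}) combined with the rough estimate $\mathcal T>\Sigma\OmegaB F$ of Proposition~\ref{prop rough}. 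In addition, the assembly along the telescope is not justified: there is no a priori fibration sequence $\Phi_k\to\tau_k\to\tau_{k+1}$ (comparing total fibers of a map of squares is a total-fiber-of-a-cube statement, not automatically a fibration sequence), and even granting one, closure under extensions by fibrations only lets you deduce $\tau_k>J$ from $\tau_{k+1}>J$ together with $\Phi_k>J$, so the downward induction never starts --- $\tau_\infty\simeq\ast$ is not a stage of it. To make a limiting argument work inside acyclic classes one must instead exhibit $\tau_0$ as a pointed homotopy colimit of the fibers $\Fib(\tau_0\to\tau_k)$, which requires exactly the basepoint and connectivity control (reduction to connected and then contractible $D$ via Mather's cube theorem, the treatment of disconnected fibers, connectedness of the total fiber) that the paper establishes in Sections~\ref{sec horfibers}, \ref{sec reduction}, \ref{sec rough} and~\ref{sec CI}. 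As written, the proposal reproduces the formal outer layer of the proof but leaves the actual excision estimate unproved.
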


Note that in the above  theorem we make no connectivity assumptions, neither do we mention any choice of base points. 
We therefore need to explain what we mean by homotopy fibers, total fibers, and loop spaces  in situations which are not covered by   
standard  conventions that involve connectivity and choices of base points. This is why we use upright font for the loop space
and calligraphic letters for possible \emph{sets} of fibers. That is the content of Subsections~\ref{fibers} 
and~\ref{convention empty}. One might wonder if it is justifiable to introduce these constructions  just to make the 
theorem assumptions free. In our opinion it is. It is not uncommon that one performs operations resulting in non 
connected spaces and under which it is not easy to keep track of base points, for example taking fibers, loops, and pull-backs. 
To measure to what extent such an operation  converts a homotopy push-out into a homotopy pull-back, which lies at the heart of 
Goodwillie calculus for example,  one is then forced to study   squares of non-connected spaces without any particular 
choice of base points. Squares of non connected spaces occur  also typical in    inductive arguments when one studies higher  
dimensional cubes.  In many of these
cases having Blakers-Massey type of statements without constraints and assumptions is very useful.

We also need to explain what we mean by $\mathcal T$ being killed by a \emph{union} of spaces, which we do in Subsection~\ref{point acyclic set}.
To give the reader an idea of the meaning of our main result let us state it in a situation where the terms and symbols refer
to the standard notions. In order for the homotopy fibers to be well-defined and connected we assume that all spaces are
simply connected and so as not to have to deal with the push-out fiber we assume furthermore that the square is a homotopy
push-out.

\medskip

\noindent
{\bf Corollary~\ref{cor classicalBM}.}
\emph{Consider a homotopy push-out square of pointed and simply connected spaces:
\[\xymatrix@C=1.5pc@R=1.5pc{
 A \ar[r]^{f} \ar[d]_{g} & B \ar[d]\\
 C \ar[r] & D}
\]
with total fiber~$T$. Then
$T>\Omega \Fib(f)\ast\Omega\Fib(g)$.}

\medskip

This implies in particular that the connectivity of $T$ is bounded below by the connectivity of the join $\Omega \Fib(f)\ast\Omega\Fib(g)$, i.e.~the
sum of the connectivities of the homotopy fibers $\Fib(f)$ and $\Fib(g)$. This gives  
Corollary~\ref{thm BMclassical}, the classical Triad Theorem of Blakers and Massey, \cite[Theorem~I]{MR0044836}.

\medskip

\noindent 
{\bf Acknowledgments.} We would like to thank the  referee for their comments and sensible
suggestions concerning the presentation of the proof. They lead us in particular to pay careful
attention to statements concerning sets of fibers rather than a single one.

\section{Notation and connectivity issues}
\label{sec notation}
In this first section we set up the notation for basic constructions and acyclic classes. 
We also  carefully introduce our conventions related to connectivity.%

\begin{point}{\bf Basic homotopical constructions.}
We work in the category of simplicial sets, so a space means a simplicial set. 
As usual, all the results presented in this paper can be translated to topological spaces
by using
any  Quillen equivalence between the model categories of simplicial sets and topological spaces
with weak homotopy equivalences.

Let $X$ be a space. For any $x$ in $\pi_0X$, the symbol $X_x$ denotes the connected component of $X$  corresponding to $x$.

The homotopy push-out  of $\Delta[0]\leftarrow A\to\Delta[0]$ is called the  suspension of $A$ and is denoted by $\Sigma A$.  
The symbol $S^{-1}$  denotes the empty simplicial set. For $n\geq 0$, the space $\Sigma^{n+1} (S^{-1})$ is also denoted by $S^{n}$ 
and called the $n$-sphere. For example $S^0=\Sigma S^{-1}$ is weakly equivalent to $\Delta[0]\coprod \Delta[0]$.  
More generally, for $n\geq 0$,  $S^n$ is weakly equivalent to the boundary of the simplex $\Delta[n+1]$.

Any contractible space containing  $A$ is called a cone over $A$ and denoted by $CA$.

Let $X$ and $Y$ be  spaces. The homotopy push-out of the following diagram is called the join of $X$ and $Y$ and is denoted by $X\ast Y$:
\[\xymatrix{
X &X\times Y\ar[l]_-{\text{pr}_X}\ar[r]^-{\text{pr}_Y} & Y
}\]
Note  that $S^{-1}$ is a unit for the join construction since $ \hocolim(S^{-1} \leftarrow 
S^{-1}\times X \to X)$ is weakly equivalent to $X$.  Furthermore the join $S^0\ast X$ is weakly equivalent to the suspension $\Sigma X$. 

A choice of a base point $x\colon\Delta[0]\to X$ in $X$, leads to an inclusion $Y\hookrightarrow X\times Y$ whose
quotient is denoted by
$X\rtimes Y$.  If  we also choose a base point $y\colon\Delta[0]\to Y$ in $Y$, we get an inclusion
$X\vee Y\subset X\times Y$ of the wedge into the product whose quotient is denoted by $X\wedge Y$ and called the smash product of $X$ and $Y$. 
Since for connected spaces $X$ and $Y$, the homotopy types of $X\rtimes Y$, the wedge $X\vee Y$, and  the smash product $X\wedge Y$ 
do not depend on the choice of the base points in $X$ and $Y$, we will use these constructions  for connected spaces without mentioning any base points.

If  $X$ and $Y$ are non-empty, then for any choice of base points in $X$ and $Y$, the join $X\ast Y$ is weakly equivalent to the suspension of the smash 
$\Sigma (X\wedge Y)$.

Let $f\colon A\to X$ be a map of spaces. We use the symbol $X/A$ to denote any space that fits into a homotopy push-out square
\[\xymatrix@C=1.5pc@R=1.5pc{A\ar@{->}[d]\ar[r]^-{f} & X\ar[d]\\
C\ar[r] &X/A}\]
where $C$ is contractible. More precisely we  first  require the above square to be strictly commutative and second, for some (equivalently any) factorization 
$A \hookrightarrow C' \xrightarrow{\simeq} C$  of the map $A\to C$ into  a cofibration followed by  a weak equivalence, we require the  induced   comparison  map from 
the push-out $C' \cup_A X$ to $X/A$ to be a weak equivalence. 

We call the space $X/A$  the homotopy cofiber of $f$, the map $X\to X/A$ the ``homotopy cofiber map'',
and the sequence of maps $A\to X\to X/A$ a cofiber sequence.
Two different choices of homotopy cofibers are always linked by a zigzag
of weak equivalences under $A$, so that the space $X/A$ is well defined up to weak equivalence and the homotopy cofiber map is
well defined up to weak equivalence under $A$.
\end{point}

\begin{point}{\bf Sets of spaces.}\label{setsofspaces}
Two sets of spaces $M$ and $N$ are equivalent if, for any space $X$ in $M$, there is a space $Y$ in $N$ which is weakly equivalent to $X$ and vice-versa, for any space $Y$ in $N$, there is a space $X$ in $M$ which is weakly equivalent to $Y$.

We  frequently apply constructions (e.g.\  the suspension,   the join) to sets of spaces 
and it is always understood that these constructions should be applied to their elements. So for example, 
for a set of spaces $M$, its suspension is $\Sigma M=\{ \Sigma X\ |\ X\in M\}$. Similarly, the join of two sets of spaces $M$ and~$N$ is given by
$M\ast N = \set{X\ast Y}{X\in M,\, Y\in N}$. 
\end{point}

\begin{point}{\bf Fibers.}\label{fibers}
Let $f\colon A \rightarrow B$  be a map  and $b$ in $\pi_0B$ be a component. The symbol 
$\Fib_b(f)$  denotes any space that fits into a homotopy pull-back square
\[\xymatrix@C=1.5pc@R=1.5pc{
\Fib_b(f)\ar[r]\ar[d] & A\ar[d]^{f}\\
P \ar[r] & B}
\]
where $P$ is a contractible space and the image of the map $P \to B$ lies in the  component~$B_b$.
More precisely we first  require the above square to be strictly commutative and second, for some (equivalently any)  factorization $P \xrightarrow{\simeq} P' \twoheadrightarrow B$ of the map  $P\to B$ into a weak equivalence 
 followed by a fibration,  we require the  induced  comparison map from $\Fib_b(f)$ to the pullback $P' \times_B A$ to be
a weak equivalence. 
We call the space  $\Fib_b(f)$ the homotopy fiber of $f$ over $b$, the map $\Fib_b(f)\to A$ the ``homotopy fiber map'', 
and the sequence $\Fib_b(f)\to A\to B$ a fibration sequence. Two different choices of homotopy fibers are always linked by a zigzag
of weak equivalences over $A$, so that the space $\Fib_b(f)$ is well defined up to weak equivalence and the homotopy fiber map is
well defined up to weak equivalence over $A$.

We use the symbol $\Fibset(f)$ to denote the set of spaces  $\{ \Fib_b(f) \, | \, b \in \pi_0(B) \}$ and call it the \emph{homotopy fiber set}
of $f$. Note that $\Fibset(f)$ is  the empty collection if and only if $f$ is the map $\text{id}_{S^{-1}}\colon S^{-1}\to S^{-1}$. 
If the set $\Fibset(f)$ is  equivalent to a set   containing only one space, than  we  use the  symbol $\Fib(f)$ to denote that space and call it the homotopy fiber of $f$. In particular whenever  the symbol $\Fib(f)$ is used, it is assumed that $\Fibset(f)$ is  equivalent to a set   containing only one space. This is the case  when for example $B$ is connected.

Consider a homotopy pull-back square:
\[\xymatrix@C=1.5pc@R=1.5pc{
 A \ar[r]^f \ar[d] & B \ar[d]^h\\
 C \ar[r]^k & D 
 }
\]
In general, the sets of spaces $\Fibset(f)$ and $\Fibset(k)$ may not be equivalent. For them to be equivalent
the assumption that $\pi_0h\colon\pi_0B\to\pi_0D$ be an epimorphism is needed.

\begin{definition}\label{def closure}
A collection of spaces $\Ccal$ is called \emph{closed under extensions by fibrations}  if, for any map  $f\colon X\to Y$ such that 
$Y$ is in $ \Ccal$ and  $\Fibset(f)\subset\Ccal$, the space  $X$ is also a member of $\Ccal$.
\end{definition}
\end{point}

\begin{point}\label{point acyclic}
{\bf Acyclic classes.}
Every  space $A$ determines a nullification or periodization functor $P_A$, \cite{B2}, \cite{Farjoun95}.
The class $\overline{\Ccal(A)}$ consists of those spaces $X$ for which  $P_A X$ 
is contractible ($P_{A}$ ``kills'' $X$). That is to say, $\overline{\Ccal(A)}$ is the class of spaces that become contractible after localizing the 
category of spaces at $\{A\to\Delta[0]\}$. 
The relation  $X\in \overline{\Ccal(A)}$ is also denoted by $X>A$ and called an acyclic inequality. 
If $X>A$, then we  say that $X$ is killed by $A$. Note that a retract of a space in $\overline{\Ccal(A)}$  belongs to 
$\overline{\Ccal(A)}$. Furthermore, $\overline{\Ccal(A)}$  is closed under extensions by fibrations (see Definition~\ref{def closure}).

If $A$ is a non-empty space, then  $\overline{\Ccal(A)}$ is in fact   the smallest class  containing $A$ and closed under
weak equivalences, pointed homotopy colimits, and extensions by fibrations, \cite[Theorem~17.3]{MR1408539}.

\begin{example}\label{ex connectivitiy}
 \begin{enumerate}
   \item $\overline{\Ccal(S^{-1})}$ is the class of all spaces;
   \item $\overline{\Ccal(S^0)}$ is the class of all non-empty spaces. More generally, if $|\pi_0 A|>1$ ($A$ has more than one component),
   then $\overline{\Ccal(A)}$ is the class of all non-empty spaces;
   \item For $n\geq 0$, $\overline{\Ccal(S^{n+1})}$ is the class of  all spaces $X$ for which $|\pi_0 X|=1$ (it has exactly one component and in particular is not  empty) and $\pi_i(X)=0$ for $n\geq i>0$. Such spaces are called $n$-connected. Spaces which are $0$-connected are  called connected.
     \end{enumerate}
\end{example}
The above examples justify to call a  space $(-1)$-connected if it is not empty, whereas every space is $(-2)$-connected, which is exactly the convention used in 
\cite{MR93i:55015}. Note that this  is consistent with the connectivity of $X\ast Y$ being the sum of the connectivities of~$X$ 
and~$Y$ plus~$2$ (cf. \cite[Corollary 4.8 (2)]{MR1452856}). 
\end{point}

\begin{point}\label{point acyclic set}
{\bf Acyclic classes for sets of spaces.}
Given a  set~$M$ of spaces, the symbol $\overline{\Ccal(M)}$ denotes the class 
of spaces that become contractible 
after localizing   at   $\{X\to\Delta[0]\ |\ X\text{ in } M\}$.  If $M$ is the empty set, then 
$\overline{\Ccal(M)}$ consists of all contractible spaces. If $M$    contains the empty space, then
$\overline{\Ccal(M)}$ is the collection of all  spaces $\overline{\Ccal(S^{-1})}$.  If  $M$  consists of non-empty spaces,
choose one base point in each space  $X$ in $M$, and take
the wedge $A \defas \bigvee_{X\in M} X$; then  $\overline{\Ccal(M)} = \overline{\Ccal(A)}$. 
In particular, $\overline{\Ccal(A)}$ does not depend on the choices of base points.

Given two  sets of spaces $M$ and $N$, we write $M>N$ and say that~$N$ \emph{kills}~$M$ if $\overline{\Ccal(M)} 
\subseteq \overline{\Ccal(N)}$ (i.e.~$M \subseteq \overline{\Ccal(N)}$).  In particular any set of spaces  kills  the empty collection.
For a space $X$, the relation  $M>\{X\}$ is also denoted by  $M>X$. For example if $M>S^n$ (for $\geq -1$), then we say that
$M$ is $(n-2)$-connected.

The extension by fibration property of the acyclic inequality can be  expressed as:
\begin{lemma}
\label{lemma closure}
\begin{enumerate}
\item $E > \Fibset(p)\cup \{B\}$ for any map   $p\colon E \rightarrow B$.
\item   $\Fibset(gf)>\Fibset(f)\cup \Fibset(g)$  for any  composable maps $f\colon X\to Y$ and   $g\colon Y\to Z$.
\end{enumerate}
\end{lemma}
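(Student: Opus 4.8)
The plan for part (a) is to read it off from the fact that the acyclic class $\overline{\Ccal(-)}$ is closed under extensions by fibrations. First I would record that for \emph{any} set of spaces $N$, the class $\overline{\Ccal(N)}$ is closed under extensions by fibrations in the sense of Definition~\ref{def closure}: this is vacuous if $N$ contains the empty space (then $\overline{\Ccal(N)}$ is the class of all spaces); it is elementary if $N$ is empty (then $\overline{\Ccal(N)}$ is the class of contractible spaces, and a fibration over a contractible, hence connected, base with contractible fiber has contractible total space); and in the remaining case $\overline{\Ccal(N)}=\overline{\Ccal\big(\bigvee_{X\in N}X\big)}$ is closed under extensions by fibrations by \cite[Theorem~17.3]{MR1408539} (see~\ref{point acyclic} and~\ref{point acyclic set}). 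Applying this with $N=\Fibset(p)\cup\{B\}$ and the map $p\colon E\to B$ then finishes the argument: since any set of spaces is contained in its own acyclic class, both $B$ and every homotopy fiber $\Fib_b(p)$ belong to $\overline{\Ccal(\Fibset(p)\cup\{B\})}$, and hence so does the total space $E$. That is precisely the inequality $E>\Fibset(p)\cup\{B\}$.

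For part (b) the plan is to reduce to~(a), one component of $Z$ at a time. Fixing $z\in\pi_0Z$, I would choose compatible homotopy fibers (see~\ref{fibers}) so as to form the diagram
\[\xymatrix@C=1.5pc@R=1.5pc{
\Fib_z(gf)\ar[r]\ar[d] & \Fib_z(g)\ar[r]\ar[d] & \ast\ar[d]\\
X\ar[r]^{f} & Y\ar[r]^{g} & Z}\]
in which the right-hand square and the outer rectangle are homotopy pull-backs (these being the defining squares of $\Fib_z(g)$ and of $\Fib_z(gf)$), so that by the pasting law for homotopy pull-backs the left-hand square is a homotopy pull-back as well. Pulling that square back once more along a point in a component $c$ of $\Fib_z(g)$ then identifies the homotopy fiber of $\Fib_z(gf)\to\Fib_z(g)$ over $c$ with $\Fib_{\bar c}(f)$, where $\bar c\in\pi_0Y$ is the image of $c$. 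Consequently every space in $\Fibset\bigl(\Fib_z(gf)\to\Fib_z(g)\bigr)$ is weakly equivalent to one in $\Fibset(f)$, while $\Fib_z(g)$ itself belongs to $\Fibset(g)$.

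Finally I would feed the map $\Fib_z(gf)\to\Fib_z(g)$ into part~(a) to get
\[\Fib_z(gf)>\Fibset\bigl(\Fib_z(gf)\to\Fib_z(g)\bigr)\cup\{\Fib_z(g)\},\]
and then combine this with the transitivity of $>$ and with the fact that $M>N$ whenever $M$ is contained, up to weak equivalence, in $N$ (both being immediate from the characterization $M>N\Leftrightarrow M\subseteq\overline{\Ccal(N)}$ together with the closure of $\overline{\Ccal(N)}$ under weak equivalences) to conclude that $\Fib_z(gf)>\Fibset(f)\cup\Fibset(g)$. Letting $z$ range over $\pi_0Z$ gives $\Fibset(gf)>\Fibset(f)\cup\Fibset(g)$, as desired. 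I do not expect a genuine obstacle here; the only points needing some care are the coherent construction of the three homotopy pull-back squares without any base points around, the degenerate case in which $\Fib_z(g)$ (and hence $\Fib_z(gf)$) is empty, which is harmless since then $\Fib_z(g)$ is itself the empty space and so $\overline{\Ccal(\Fibset(f)\cup\Fibset(g))}$ is the class of all spaces, and the discipline of keeping every identification at the level of weak equivalences, which is all that the relation $>$ records.
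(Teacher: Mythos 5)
Your proof is correct, and it fleshes out exactly the argument the paper intends: the paper states this lemma without proof, presenting it as a direct reformulation of the fact (recorded in~\ref{point acyclic}, via \cite[Theorem~17.3]{MR1408539}) that acyclic classes are closed under extensions by fibrations, which is precisely your part~(a). Your part~(b) --- reducing to~(a) by pasting homotopy pull-backs to identify the fibers of $\Fib_z(gf)\to\Fib_z(g)$ with fibers of $f$, and handling the empty-fiber degenerate case --- is the standard argument the paper leaves implicit, and it is carried out correctly.
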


\end{point}

\begin{point}{\bf Loops.}\label{convention empty}
Let $X$ be a space. We are going to use the following notation:
\[{ \OmegaB} X\defas\begin{cases}
\text{Fib}(\Delta[0]\to X) & \text{ if $X$ is connected}\\
S^{-1} & \text{ if $X$ is not connected}
\end{cases}\]
For a connected and pointed space $X$, the space ${\OmegaB} X$
is weakly equivalent to  the standard   loop space $\Omega X$. 
We should remark however, that with this convention, some caution is  required when dealing with maps and the suspension loop adjunction. 
For example, we cannot apply this construction $\OmegaB$ to an arbitrary map. 
We can do that, however, if the range of the map is either connected or empty. 

The reason for this definition, other than making our main theorem work, is that looping should lower the connectivity. 

Let $M$ be a set of spaces. The symbol $\OmegaB M$ denotes the set of spaces of the form $\OmegaB X$ for any $X$ in $M$. 
Note that with our convention it is still true that  $M>N$ implies $\OmegaB M >\OmegaB N$~\cite[Theorem~3.4~(4)]{MR1464865}. 

Let $X$ be a  space and $CX$ its cone. Consider the homotopy push-out square
\[
\xymatrix@C=1.5pc@R=1.5pc{
X \ar[r] \ar[d] & CX \ar[d]\\
CX \ar[r]  & \Sigma X}
\]
By removing the initial space in the above square and taking the homotopy pull-back of the remaining diagram we obtain a space weakly 
equivalent to $\OmegaB \Sigma X$. The map $\eta\colon X \to \OmegaB\Sigma X$ induced by commutativity of  the above diagram is called
the canonical comparison or James map.  
 This map has the following property. If $Y$ is connected and pointed, then for any 
morphism $f\colon X\to \Omega Y$ in $\text{Ho}(\text{Spaces}_*)$, there is a unique morphism $f^\sharp\colon \Sigma X\to Y$
in $\text{Ho}(\text{Spaces}_*)$ for which  $f=(\Omega f^\sharp) \eta$. This is the standard suspension loop adjunction 
and can be proved either by an elementary direct calculation
-- using explicit models for cones, cylinders, and homotopies -- 
or by universal properties in a derivatoresque framework, see \cite[Proposition~3.17]{MR3031644} 
for inspiration. The map  $f^\sharp$ is called the adjunct of $f $.
\begin{lemma}
\label{lemma loop-suspension}
For any $X$, $\Fibset(\eta\colon X \to \OmegaB\Sigma X)>\OmegaB X\ast \OmegaB X$.
\end{lemma}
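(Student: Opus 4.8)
The plan is to dispatch the degenerate cases, then reduce to $X$ pointed connected, where the real statement is $\Fib(\eta)>\Omega X\ast\Omega X$, and attack that through the James construction; the content is concentrated in one ``layer'' estimate, which is the main obstacle.

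\medskip

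\emph{Reductions.} If $X=S^{-1}$ then $\eta=\mathrm{id}_{S^{-1}}$, so $\Fibset(\eta)$ is the empty collection, which is killed by any set of spaces. If $X$ is non-empty but not connected then $\OmegaB X=S^{-1}$, so $\OmegaB X\ast\OmegaB X=S^{-1}$, and $\overline{\Ccal(S^{-1})}$ is the class of all spaces; again there is nothing to prove. So assume $X$ pointed and connected. Then $\Sigma X$, and hence $\OmegaB\Sigma X=\Omega\Sigma X$, is connected, so $\Fibset(\eta)=\{\Fib(\eta)\}$, $\OmegaB X=\Omega X$, and the claim becomes $\Fib(\eta)>\Omega X\ast\Omega X$.

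\medskip

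\emph{Reduction to the James layers.} Model $\Omega\Sigma X$ by the James construction $\hocolim_{n}J_{n}X$, with $J_{1}X=X$ and cofibre sequences $J_{n-1}X\to J_{n}X\to X^{\wedge n}$, so that $\eta$ is the inclusion $J_{1}X\hookrightarrow\hocolim_{n}J_{n}X$. Since $X$ is connected, $X^{\wedge n}$ is $(n-1)$-connected and the maps $J_{n}X\hookrightarrow\hocolim_{m}J_{m}X$ become more and more connected; and since in simplicial sets a sequential homotopy colimit along cofibrations commutes with homotopy pull-back, $\Fib(\eta)\simeq\hocolim_{n}\Fib(J_{1}X\hookrightarrow J_{n}X)$. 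Feeding the composites $J_{1}X\hookrightarrow J_{n-1}X\hookrightarrow J_{n}X$ into Lemma~\ref{lemma closure}(b) (equivalently, using the fibration sequences $\Fib(J_{1}X\hookrightarrow J_{n-1}X)\to\Fib(J_{1}X\hookrightarrow J_{n}X)\to\Fib(J_{n-1}X\hookrightarrow J_{n}X)$), together with closure of $\overline{\Ccal(\,\cdot\,)}$ under extensions by fibrations (Definition~\ref{def closure}) and under pointed homotopy colimits, it is enough to prove that for every $n\ge 2$
\[
\Fib\bigl(J_{n-1}X\hookrightarrow J_{n}X\bigr)>\Omega X\ast\Omega X .
\]

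\medskip

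\emph{The layer estimate, and the main obstacle.} This is the heart of the proof, and the step I expect to be hardest. The inclusion $J_{n-1}X\hookrightarrow J_{n}X$ has cofibre $X^{\wedge n}$, is highly connected, and its attaching data is assembled from iterated Whitehead products of the fundamental class of $X$; via the James splitting $\Sigma J_{k}X\simeq\bigvee_{j\le k}\Sigma X^{\wedge j}$ its \emph{suspension} is the wedge-summand inclusion $\bigvee_{j<n}\Sigma X^{\wedge j}\hookrightarrow\bigvee_{j\le n}\Sigma X^{\wedge j}$, whose homotopy fibre is computable — it is $\Omega\bigl(\Sigma X^{\wedge n}\rtimes\Omega\bigl(\bigvee_{j<n}\Sigma X^{\wedge j}\bigr)\bigr)$, a space assembled by loops, half-smashes, wedges and cofibres from the smash powers $X^{\wedge m}$, all of which satisfy $X^{\wedge m}>X$ (for instance because $X\wedge X$ is the cofibre of $X\vee X\hookrightarrow X\times X$, so $X\wedge X>X$, and then induct). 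Two things then remain: (i) to descend from this suspended picture to $\Fib(J_{n-1}X\hookrightarrow J_{n}X)$ itself — a Freudenthal/connectivity comparison exploiting that the inclusion is highly connected; and (ii), the delicate point, to propagate through all of these constructions not merely ``$>X$'' but the sharp ``$>\Omega X\ast\Omega X$''. Point (ii) does \emph{not} follow from the soft closure properties — the two acyclic classes are in general incomparable (e.g.\ $\overline{\Ccal(S^{2})}\supsetneq\overline{\Ccal(\Omega S^{2}\ast\Omega S^{2})}$) — so it must be read off from the precise shape of the relevant (co)fibre sequences, using loop- and suspension-monotonicity of acyclic inequalities and Lemma~\ref{lemma closure} on the individual pieces. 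The case $n=2$ is the cleanest test case: there $J_{2}X=\hocolim\bigl(X\xleftarrow{\nabla}X\vee X\hookrightarrow X\times X\bigr)$ and the fibre of $X\hookrightarrow J_{2}X$ is directly accessible, and the higher layers are meant to follow by the same mechanism. Getting the bookkeeping in (ii) right is the crux.
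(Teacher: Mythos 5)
Your handling of the degenerate cases is correct and matches the paper exactly: for $X=S^{-1}$ the fiber set is the empty collection, and for $X$ non-empty but disconnected the right-hand side is $S^{-1}\ast S^{-1}=S^{-1}$, whose acyclic class is the class of all spaces. The formal reductions in the connected case are also sound: for a sequential homotopy colimit along cofibrations one does have $\Fib(\eta)\simeq\hocolim_n\Fib(J_1X\hookrightarrow J_nX)$, and Lemma~\ref{lemma closure}(b) together with closure of $\overline{\Ccal(\Omega X\ast\Omega X)}$ under pointed homotopy colimits does reduce the claim to the layer inequality $\Fib(J_{n-1}X\hookrightarrow J_nX)>\Omega X\ast\Omega X$.

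The problem is that this layer inequality \emph{is} the lemma, and you do not prove it; you flag it yourself as ``the crux'' and leave both of your remaining points (i) and (ii) open. Point (i) in particular is not a routine Freudenthal comparison: computing the homotopy fiber of $\Sigma(J_{n-1}X\hookrightarrow J_nX)$ via the James splitting gives no direct control over $\Fib(J_{n-1}X\hookrightarrow J_nX)$, because cellular inequalities do not desuspend for free --- passing from information about cofibers or about $\Fib(\Sigma f)$ to information about $\Fib(f)$ is exactly the hard content here. The paper's own proof is a one-line citation for this reason: the connected case is \cite[Theorem~7.2]{MR1464865} (\emph{Desuspending and delooping cellular inequalities}), a substantial theorem whose proof requires the machinery developed in that paper. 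So your write-up establishes the easy reductions and a plausible skeleton, but the mathematical heart of the statement --- either a genuine proof of the layer estimate or an appeal to the known theorem --- is missing.
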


\begin{proof}
The case  $X$ is connected has been proved in \cite[Theorem~7.2]{MR1464865}.
With our convention that $\OmegaB X=S^{-1}$ if $X$ is not connected, the lemma is vacuously true in that case as well.
\end{proof}
\end{point}

\begin{point}{\bf Total fibers and push-out fibers.}\label{pt totalfiber}
Given a commutative square
\[
\xymatrix@C=1.5pc@R=1.5pc{
 A \ar[r] \ar[d] & B \ar[d]\\
 C \ar[r] & D & *!<2em,.7ex>{,} 
 }
\]
there is a natural map from $A$ to the homotopy pull-back $P$ of  $B \rightarrow D
\leftarrow C$ (the diagram obtained from the square after removing the initial object $A$).  
The homotopy fiber set  $\Fibset(A\to P)$ is called the \emph{total fiber set} of the square above and is often denoted by
$\mathcal T$. If $\mathcal T$ is equivalent to a set with only one space (for example if $P$
is connected),  then  this space is called the \emph{total fiber}  of the square and is often denoted by the symbol~$T$.

If $B$   and $D$ are connected, then, alternatively, the total fiber can be obtained 
as a homotopy fiber between homotopy fibers. We extend Goodwillie's result, see \cite[Section~1]{MR93i:55015},
 to the case of a set of total fibers.

\begin{lemma}
\label{lemma total}
If $B$   and $D$ are connected, then the total fiber set is equivalent to the  homotopy fiber set  of the induced map
$\Fib(A \rightarrow B) \rightarrow \Fib(C \rightarrow D)$.
\end{lemma}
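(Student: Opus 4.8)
The plan is to exhibit both the total fibre set $\mathcal T=\Fibset(A\to P)$ and the set $\Fibset\bigl(\Fib(A\to B)\to\Fib(C\to D)\bigr)$ as the fibre sets of the two horizontal edges of a single homotopy cartesian square, and then to apply the $\pi_0$-surjectivity criterion recorded in~\ref{fibers}. Write $h\colon B\to D$ and $k\colon C\to D$ for the right and bottom maps of the square and $P\defas\holim(B\xrightarrow{h}D\xleftarrow{k}C)$, so that $\mathcal T=\Fibset(A\to P)$ by definition. Since $B$ and $D$ are connected we may fix base points $b_0$ in $B$ and $d_0\defas h(b_0)$ in $D$; then $\Fib(f)\defas\Fib_{b_0}(A\to B)$ and $\Fib(k)\defas\Fib_{d_0}(k)$ are well defined up to weak equivalence, and the map $A\to C$ together with the commutativity of the square induces the map $\varphi\colon\Fib(f)\to\Fib(k)$ appearing in the statement.

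The first step is to analyse the projection $\mathrm{pr}_B\colon P\to B$. It satisfies $\mathrm{pr}_B\circ(A\to P)=(A\to B)$, and its homotopy fibre over $b_0$ is the homotopy pull-back of $\Delta[0]\xrightarrow{b_0}B\xrightarrow{h}D\xleftarrow{k}C$, which --- since $\Delta[0]\to B\to D$ is just the point $d_0$ --- is $\Fib_{d_0}(k)=\Fib(k)$. Thus $A\to P$ is a map of spaces over $B$, and a short diagram chase with path-space models shows that its induced map on homotopy fibres over $b_0$ is exactly $\varphi$. The second step is the standard fact --- a consequence of Fubini for homotopy limits, equivalently of the pasting lemma for homotopy pull-backs --- that for any map $E_1\to E_2$ of spaces over $B$ the square with corners $\Fib_{b_0}(E_1\to B)$, $\Fib_{b_0}(E_2\to B)$, $E_1$, $E_2$ and rows $\Fib_{b_0}(E_1\to B)\to\Fib_{b_0}(E_2\to B)$ and $E_1\to E_2$ is homotopy cartesian. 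Applied to $A\to P$ over $B$ this yields the homotopy cartesian square
\[\xymatrix@C=1.5pc@R=1.5pc{
\Fib(f)\ar[r]^{\varphi}\ar[d] & \Fib(k)\ar[d]\\
A\ar[r] & P}\]
whose right vertical edge is, up to weak equivalence, the inclusion of the homotopy fibre of $\mathrm{pr}_B$ over $b_0$ into $P$.

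Finally, since $B$ is connected, the long exact sequence of the fibration sequence $\Fib(k)\to P\to B$ --- equivalently, lifting paths of $B$ through the homotopy pull-back model of $P$ --- shows that $\pi_0\Fib(k)\to\pi_0 P$ is surjective; that is, the right vertical edge of the displayed square is $\pi_0$-surjective. The criterion from~\ref{fibers} then gives $\Fibset(\varphi)\simeq\Fibset(A\to P)=\mathcal T$, which is the assertion.

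The only genuine work will be in the bookkeeping: making the identification of $\Fib_{b_0}(\mathrm{pr}_B)$ with $\Fib(k)$ and of the induced map on fibres with $\varphi$ precise and coherent (so that these hold at the level of fibre \emph{sets}, not just single spaces, and do not depend on the chosen base points), and pinning down exactly the Fubini/pasting statement used. Once those points are settled the argument is a formal manipulation of homotopy pull-backs, so I expect it to be short; the coherence of $\varphi$ and the $\pi_0$-surjectivity step are the parts most worth spelling out.
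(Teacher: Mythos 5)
Your argument is correct and is essentially the paper's own proof: both identify $\Fib(P\to B)$ with $\Fib(C\to D)$, form the homotopy cartesian square with corners $\Fib(A\to B)$, $\Fib(C\to D)$, $A$, $P$, and use the $\pi_0$-surjectivity of $\Fib(C\to D)\to P$ to compare the fibre sets of the two parallel edges. You merely spell out in more detail the pasting argument and the reason for $\pi_0$-surjectivity, which the paper leaves implicit.
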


\begin{proof}
Under these assumptions the  homotopy fiber sets $\Fibset(A \rightarrow B)$ and $\Fibset(C \rightarrow D)$
are equivalent to sets with only one space, which  are denoted respectively by $\Fib(A \rightarrow B)$ and $\Fib(C \rightarrow D)$. However, as neither $\Fib(C \rightarrow D)$   nor the homotopy pull-back $P$
are necessarily connected, we need to deal  with homotopy fiber sets.

Observe that $\Fib(P \rightarrow B)$ and $\Fib(C \rightarrow D)$ are weakly equivalent and the induced square:
\[
\xymatrix@C=1.5pc@R=1.5pc{
 \Fib(A \rightarrow B) \ar[r] \ar[d] & A\ar[d]\\
 \Fib(C \rightarrow D) \ar[r] & P 
 }
\]
is a homotopy pull-back. The map $\Fib(C \rightarrow D) \rightarrow  P$ induces a surjection on the sets of components
and every choice of a  base point in $\Fib(C \rightarrow D)$ determines a base point in $P$. 
In the above homotopy pull-back square, the vertical homotopy fibers over 
these base points are  weakly equivalent, which proves the lemma. 
\end{proof}

There is also a natural map from the homotopy push-out $Q$ of $C\leftarrow A\to B$ (the diagram obtained from the square after removing 
the vertex $D$) into $D$. 
The homotopy fiber set  $\Fibset(Q\to D)$ is called 
the \emph{push-out fiber set} of the square, often denoted by~$\mathcal R$.
If $\mathcal R$ is equivalent to a set with only one space (for example if $D$
is connected),  then  this space is called the \emph{push-out fiber}  of the square and is often denoted by the symbol~$R$.

The total fiber set measures how far a square is from being a homotopy pull-back, while the push-out fiber set measures 
how far the square is from being a homotopy push-out.
\end{point}

\section{The main theorem and examples.}
\label{sec main}
The most important case we will deal with is that of a homotopy push-out square
\begin{equation}\label{maintheorem}\vcenter{\hbox{$\xymatrix@C=1.5pc@R=1.5pc{
 A \ar[r]^{f} \ar[d]_{g} & B \ar[d]^{h}\\
 C \ar[r]^{k} & D & *!<2em,.5ex>{.}}$}}
\end{equation}
We will refer to this diagram and use the same names for the spaces and maps of such a push-out square
throughout the whole article.

\begin{theorem}
\label{thm BMsquare}
Let  $\mathcal T$ be the total fiber set of a homotopy push-out square~(\ref{maintheorem}). Then ${\mathcal T} >
\OmegaB\Fibset(f)\ast\OmegaB\Fibset(g)$.
\end{theorem}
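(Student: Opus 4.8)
The plan is to reduce the general statement to the connected, pointed case handled by earlier results, then run an induction over a cellular (CW) filtration of one of the maps. First I would dispose of degenerate cases using the conventions of Subsection~\ref{convention empty}: if $B=\emptyset$ then $A=\emptyset$ and the push-out forces $C\simeq D$, so $\mathcal T$ is trivial and the inequality holds vacuously; similarly if $f$ is the identity of $S^{-1}$. If $B$ (or $C$) is disconnected, then $\OmegaB\Fibset(f)$ contains an empty space (or $\OmegaB\Fibset(f)$ itself encodes this via the convention $\OmegaB X=S^{-1}$), so the right-hand side contains $S^{-1}$ and the statement becomes ``$\mathcal T$ is killed by the class of all spaces'', which is automatic; one must only check that the bookkeeping of Subsection~\ref{point acyclic set} makes this precise, using that a disconnected $B$ or $C$ yields a disconnected homotopy pull-back $P$ along that edge. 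So the real content is when $B$ and $C$ are connected; then $D$ is connected too, and by Lemma~\ref{lemma total} the total fiber set is the homotopy fiber set of $\Fib(f)\to\Fib(g)$, though $\Fib(g)$ need not be connected.

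The main step is an induction on the relative cell structure of $f\colon A\to B$ (or, symmetrically, of $g$). Factor $f$ through a relative CW-complex $B=\colim_n B_n$ with $B_{-1}=A$ and $B_n$ obtained from $B_{n-1}$ by attaching a set of $n$-cells, i.e.\ a homotopy push-out $B_{n-1}\leftarrow \coprod S^{n-1}\to B_n$ with the left map a wedge inclusion. For each stage form the pushed-out square over $C$, producing $D=\colim_n D_n$, with total fiber sets $\mathcal T_n$. The base case is $f$ an equivalence, where $\mathcal T$ is trivial. For the inductive step I would glue three homotopy push-out squares: the square for $B_{n-1}$, the square for the cell attachment $\coprod S^{n-1}\to B_n$ over the relevant space, and use Lemma~\ref{lemma closure}(b) (composition of fiber sets) together with the pasting lemma for homotopy pull-backs to express $\mathcal T_n$ in terms of $\mathcal T_{n-1}$ and the total fiber of the elementary ``cell'' square. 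For the elementary square, attaching a cell $S^{n-1}$ means $B_n$ is a homotopy cofiber $B_{n-1}/\coprod S^{n-1}$-type object, i.e.\ a suspension relative to $B_{n-1}$; here the total fiber is controlled by the James map comparison: the fiber of $X\to\OmegaB\Sigma X$ is killed by $\OmegaB X\ast\OmegaB X$ by Lemma~\ref{lemma loop-suspension}, and more generally the total fiber of the square obtained by coning off a subspace is governed by a join of loop spaces of the homotopy fibers of $f$. Taking the homotopy colimit over $n$ and using that $\overline{\Ccal(-)}$ is closed under pointed homotopy colimits and extensions by fibrations (Subsection~\ref{point acyclic}) then assembles the bound $\mathcal T>\OmegaB\Fibset(f)\ast\OmegaB\Fibset(g)$.

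The hard part will be the elementary cell-attachment case, where one must produce, without base points and allowing disconnected fibers, an explicit acyclic inequality for the total fiber of a square whose top map is a relative suspension, and then check that the join $\OmegaB\Fibset(f)\ast\OmegaB\Fibset(g)$ (a join of \emph{sets}) is large enough to absorb the contributions of all cells simultaneously. This requires care with the conventions: the James map $\eta$ only applies when the target is connected or empty, so one must verify connectivity at each stage or fall back on the disconnected convention; and the passage from a single fiber to the fiber \emph{set} (as flagged in the acknowledgments) means each homotopy pull-back square must be analyzed componentwise over $\pi_0$ of the relevant base, checking that surjectivity on components holds where Lemma~\ref{lemma total}-type arguments are invoked. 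I expect the colimit step and the closure properties to be routine once the elementary square is in hand; the genuine obstacle is setting up the elementary square so that Lemma~\ref{lemma loop-suspension} can be fed in cleanly, including tracking how $\OmegaB\Fibset(f)$ rather than $\OmegaB\Fibset(\text{cell map})$ emerges.
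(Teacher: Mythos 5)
Your reduction of the ``degenerate'' cases is not correct, and this is not a cosmetic point. If only one of the two fiber sets, say $\Fibset(g)$, contains a disconnected space, then $\OmegaB\Fibset(g)$ contains $S^{-1}$, but since $S^{-1}$ is the \emph{unit} for the join, the set $\OmegaB\Fibset(f)\ast\OmegaB\Fibset(g)$ then contains copies of the spaces of $\OmegaB\Fibset(f)$ rather than the empty space; the class $\overline{\Ccal(\OmegaB\Fibset(f)\ast\OmegaB\Fibset(g))}$ is therefore \emph{not} the class of all spaces, and the inequality reduces to the non-trivial claim $\mathcal T>\OmegaB\Fibset(f)$. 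The paper proves this by observing that each $T_0$ in $\mathcal T$ sits in a fibration sequence $T_0\to F_0\to C$ with $F_0$ in $\Fibset(f)$, and combining $\Fib(T_0\to F_0)\simeq\Omega C$ with the inequality $C>\Fibset(f)$ of Proposition~\ref{prop fibersinpushout}. Note also that disconnectedness of $B$ does not by itself force $\OmegaB\Fibset(f)$ to contain an empty space, and that Lemma~\ref{lemma total} identifies $\mathcal T$ with the fiber set of the map $\Fib(f)\to\Fib(k)$ between \emph{horizontal} fibers, not of a map $\Fib(f)\to\Fib(g)$.

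The more serious problem is that your main step --- induction over a relative cell structure of $f$ --- concentrates the entire difficulty of the theorem into the ``elementary cell-attachment case'' and the assembly over cells, for which you offer no mechanism. Connectivity-based proofs of Blakers--Massey survive a cell induction because the connectivities contributed by individual cells add up; here the right-hand side is the specific set $\OmegaB\Fibset(f)\ast\OmegaB\Fibset(g)$ built from the honest homotopy fibers, and the contribution of an $n$-cell is a priori expressed in terms of spheres and of the cofiber $B/A$, which is related to $\Fibset(f)$ only by inequalities pointing the wrong way for your purposes. The paper's proof is structured quite differently: after reducing to $D=\Delta[0]$ (Propositions~\ref{prop fibersinpushout} and~\ref{prop reduction}), it identifies $T$ with the fiber of $\alpha\colon F\to C$, factors $\alpha$ through the homotopy fiber $H$ of $C\to C/F$, and bounds the two resulting fiber sets by combining the cofibration estimate of Proposition~\ref{prop cof} (where the James map and Lemma~\ref{lemma loop-suspension} enter) with the one-fiber ``rough estimate'' $\mathcal T>\Sigma\OmegaB F$ of Proposition~\ref{prop rough}, applied to \emph{both} $F$ and $G$. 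None of these ingredients appears in your plan beyond the James map, so as it stands the proposal does not contain the idea that makes the theorem go through.
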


The proof is given in Section~\ref{sec maintheorem}. In order to relate this statement to a more conventional situation,
let us state the same result for simply connected and pointed spaces, just like we did in the introduction.

\begin{corollary}\label{cor classicalBM}
Consider a homotopy push-out square of pointed and simply connected spaces:
\[\xymatrix@C=1.5pc@R=1.5pc{
 A \ar[r]^{f} \ar[d]_{g} & B \ar[d]\\
 C \ar[r] & D}
\]
with total fiber~$T$. Then
$T>\Omega \Fib(f)\ast\Omega\Fib(g)$.
\end{corollary}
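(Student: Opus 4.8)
The plan is to deduce Corollary~\ref{cor classicalBM} from Theorem~\ref{thm BMsquare} by checking that, under the additional hypotheses of simple connectivity and pointedness, all the set-theoretic notions in the general statement collapse to the classical ones. First I would observe that since $B$, $C$, and $D$ are simply connected, the homotopy pull-back $P$ of $B\to D\leftarrow C$ is connected (in fact simply connected, by the Mayer--Vietoris/van Kampen type argument, or simply because $\pi_0 P$ and $\pi_1 P$ are built from those of $B$, $C$, $D$ via a pull-back). Hence, by the discussion in~\ref{pt totalfiber}, the total fiber set $\mathcal T$ is equivalent to a set containing a single space, namely the total fiber $T$; so the left-hand side of the inequality in Theorem~\ref{thm BMsquare} is exactly $\{T\}$, and ``$\mathcal T> \dots$'' becomes ``$T>\dots$''.

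Next I would treat the right-hand side. Since $B$ is connected, Subsection~\ref{fibers} tells us that $\Fibset(f)$ is equivalent to a set $\{\Fib(f)\}$ with one element, and likewise $\Fibset(g)$ is equivalent to $\{\Fib(g)\}$ because $C$ is connected. Moreover, as $B$ and $C$ are simply connected, $\Fib(f)$ and $\Fib(g)$ are connected: the fibration sequences $\Fib(f)\to A\to B$ and $\Fib(g)\to A\to C$ yield exact sequences $\pi_1 B\to \pi_0\Fib(f)\to \pi_0 A$ of pointed sets, and $\pi_1 B=0$, $\pi_0 A=\ast$ force $\pi_0\Fib(f)=\ast$; similarly for $\Fib(g)$. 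Therefore, by the definition of $\OmegaB$ in~\ref{convention empty}, $\OmegaB\Fib(f)$ is weakly equivalent to the usual loop space $\Omega\Fib(f)$ (and is \emph{not} $S^{-1}$), and likewise $\OmegaB\Fib(g)\simeq\Omega\Fib(g)$. Hence the set $\OmegaB\Fibset(f)\ast\OmegaB\Fibset(g)$ is equivalent to the one-element set $\{\Omega\Fib(f)\ast\Omega\Fib(g)\}$.

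Putting the two reductions together, Theorem~\ref{thm BMsquare} specializes to $\{T\}>\{\Omega\Fib(f)\ast\Omega\Fib(g)\}$, which by the conventions of~\ref{point acyclic set} is exactly the assertion $T>\Omega\Fib(f)\ast\Omega\Fib(g)$. That is the statement of the corollary.

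I do not expect a genuine obstacle here; the only point requiring a little care is making sure the \emph{fibers} (not just the ambient spaces) are connected, so that the calligraphic-to-Roman passage $\OmegaB\Fibset\leadsto\Omega\Fib$ is legitimate and the loop spaces do not degenerate to $S^{-1}$ — this is precisely where simple connectivity of $B$ and $C$ is used. One should also note that pointedness is needed only to make sense of the classical $\Omega$ and $\ast$ in the statement; the homotopy types involved are independent of the chosen base points because all relevant spaces are connected. No homotopy push-out hypothesis beyond the one already assumed is needed, since Theorem~\ref{thm BMsquare} is stated for push-out squares.
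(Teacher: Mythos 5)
Your proposal is correct and is essentially the derivation the paper intends: the corollary is stated as a direct specialization of Theorem~\ref{thm BMsquare}, and your verifications (connectedness of the pull-back $P$ so that $\mathcal T$ reduces to a single space $T$, and connectedness of $\Fib(f)$ and $\Fib(g)$ via the exact sequences using $\pi_1B=\pi_1C=0$, so that $\OmegaB$ agrees with the classical $\Omega$) are exactly the points that make the specialization legitimate. The paper omits these details, so your write-up supplies precisely the missing bookkeeping and nothing more is needed.
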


When we only pay attention to the connectivity of the fibers, we
obtain, as a straightforward corollary, the classical triad theorem of
Blakers-Massey, \cite[Theorem~I]{MR0044836}, or rather its reformulation by
Goodwillie in \cite[Theorem~2.3]{MR93i:55015}.

\begin{corollary}
\label{thm BMclassical}
If, in the homotopy push-out square~(\ref{maintheorem})  above,
$ \Fibset(f)$ is $n$-connected and\/ $\Fibset(g)$ is $m$-connected for
some $m$,~$n\geq -1$, then the total fiber set\/ $\mathcal T$ of the square is
$(m+n)$-connected.
\end{corollary}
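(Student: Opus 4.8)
The plan is to deduce Corollary~\ref{thm BMclassical} directly from Theorem~\ref{thm BMsquare} (equivalently, from Corollary~\ref{cor classicalBM} once we have reduced to the simply connected situation, but the general form of Theorem~\ref{thm BMsquare} already suffices). The point is that ``$n$-connected'' for a \emph{set} of spaces has been given a precise meaning in Subsection~\ref{point acyclic set}: $\Fibset(f)$ is $n$-connected means $\Fibset(f)>S^{n+2}$, and likewise $\Fibset(g)>S^{m+2}$. So the whole argument is the observation that taking loops, then joins, then invoking the acyclic inequality of Theorem~\ref{thm BMsquare}, all behave well with respect to the connectivity bookkeeping.

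First I would record what looping does to the spheres. By Example~\ref{ex connectivitiy} and the definition of $\OmegaB$ in Subsection~\ref{convention empty}, $\OmegaB S^{n+2}$ is (weakly equivalent to) $S^{n+1}$ when $n+2\geq 1$, i.e.\ for $n\geq -1$; the edge case $n=-1$ gives $\OmegaB S^{1}\simeq S^{0}$, still consistent. Since $M>N$ implies $\OmegaB M>\OmegaB N$ (cited in Subsection~\ref{convention empty} from \cite[Theorem~3.4~(4)]{MR1464865}), from $\Fibset(f)>S^{n+2}$ we get $\OmegaB\Fibset(f)>S^{n+1}$, and similarly $\OmegaB\Fibset(g)>S^{m+1}$.

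Next I would pass to the join. The join of acyclic classes is monotone in each variable: if $M>M'$ and $N>N'$ then $M\ast N>M'\ast N'$ (this is part of the formalism of acyclic classes underlying Subsection~\ref{point acyclic set}; concretely it follows from the fact that $\overline{\Ccal(-)}$ is generated by pointed homotopy colimits and extensions by fibrations, together with the way the join interacts with these). Hence $\OmegaB\Fibset(f)\ast\OmegaB\Fibset(g)>S^{n+1}\ast S^{m+1}$. Now $S^{n+1}\ast S^{m+1}\simeq S^{(n+1)+(m+1)+1}=S^{m+n+3}$, by the standard fact recalled in Subsection~\ref{point acyclic} that the join adds connectivities plus $2$ (equivalently, for spheres, $S^{p}\ast S^{q}\simeq S^{p+q+1}$), valid here since $n+1\geq 0$ and $m+1\geq 0$ so both spheres are non-empty. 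Therefore $\OmegaB\Fibset(f)\ast\OmegaB\Fibset(g)>S^{m+n+3}$.

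Finally I would combine this with Theorem~\ref{thm BMsquare}, which gives $\mathcal T>\OmegaB\Fibset(f)\ast\OmegaB\Fibset(g)$, and use transitivity of the relation $>$ (immediate from the definition $\overline{\Ccal(M)}\subseteq\overline{\Ccal(N)}$): we conclude $\mathcal T>S^{m+n+3}$, which by the definition in Subsection~\ref{point acyclic set} says exactly that $\mathcal T$ is $(m+n)$-connected. I do not expect a genuine obstacle here; the only thing to be careful about is the edge cases $m=-1$ or $n=-1$, where one must check that $\OmegaB$ and the join still produce the expected spheres rather than falling into the ``$S^{-1}$'' branch of the definition of $\OmegaB$ — but since $S^{n+2}$ is connected for $n\geq -1$, this is fine, and the bookkeeping goes through uniformly.
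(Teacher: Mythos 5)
Your overall strategy is exactly the paper's: rewrite the connectivity hypotheses as cellular inequalities against spheres, loop them, join them, and feed the result into Theorem~\ref{thm BMsquare} together with monotonicity of the join and transitivity of $>$. The one real problem is an off-by-one in the translation step. You take the sentence in Subsection~\ref{point acyclic set} (``if $M>S^n$ then $M$ is $(n-2)$-connected'') at face value and translate ``$\Fibset(f)$ is $n$-connected'' as $\Fibset(f)>S^{n+2}$. That sentence is inconsistent with the rest of the paper: by Example~\ref{ex connectivitiy}, $\overline{\Ccal(S^{n+1})}$ is the class of $n$-connected spaces and $\overline{\Ccal(S^{0})}$ is the class of $(-1)$-connected (i.e.\ non-empty) spaces, so $M>S^{k}$ corresponds to $(k-1)$-connectivity, and the paper's own proof of this corollary accordingly starts from $\Fibset(f)>S^{n+1}$ and $\Fibset(g)>S^{m+1}$. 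With the hypothesis in that intended form your first step fails: $n$-connectivity of the fibers gives only $\Fibset(f)>S^{n+1}$, not $S^{n+2}$. The corrected chain is $\OmegaB\Fibset(f)>\OmegaB S^{n+1}>S^{n}$ and $\OmegaB\Fibset(g)>S^{m}$, then $S^{n}\ast S^{m}\simeq S^{n+m+1}$, hence $\mathcal T>S^{n+m+1}$, which is precisely $(m+n)$-connectivity. Note that in the edge case $n=-1$ one then really does land in the $\OmegaB S^{0}=S^{-1}$ branch of the convention in Subsection~\ref{convention empty}, and one uses that $S^{-1}$ is the unit for the join.

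Two smaller points. First, $\OmegaB S^{n+2}$ is not weakly equivalent to $S^{n+1}$ (for instance $\Omega S^{2}$ has nontrivial higher homotopy, and $\Omega S^{1}$ is a countable discrete set); what is true, and all you need, is the inequality $\OmegaB S^{n+2}>S^{n+1}$. Second, your bookkeeping is not internally consistent at the end: under the convention you adopted, $\mathcal T>S^{m+n+3}$ would say that $\mathcal T$ is $(m+n+1)$-connected, not ``exactly'' $(m+n)$-connected. Both issues are symptoms of the same index shift; once you start from $S^{n+1}$ and $S^{m+1}$ as the paper does, the argument closes up exactly and coincides with the paper's proof.
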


\begin{proof}
The connectivity assumptions can be reformulated as inequalities 
$ \Fibset(f)>S^{n+1}$ and $\Fibset(g)> S^{m+1}$. The claim now follows from 
$\OmegaB S^{n+1} * \OmegaB S^{m+1} > S^{n+m+1}$ and the fact that
the join construction preserves inequalities.
\end{proof}

In the rest of this section, we give examples illustrating various particular cases of  Theorem~\ref{thm BMsquare}. 

\begin{example}
\label{ex product}
Let $A$ and $B$ be connected spaces. Choose base points $a\colon\Delta[0]\to A$ and  $ b\colon\Delta[0]\to B$
and consider the induced collapse maps $A \leftarrow A \vee B \rightarrow B$. These maps fit into the following homotopy push-out square:
\[\xymatrix@C=1.5pc@R=1.5pc{
A \vee B \ar[r] \ar[d] & B \ar[d]\\
A \ar[r]  & \Delta[0]}
\]
This is a  typical  homotopy push-out with the terminal space being contractible. The total fiber $T$ of this  square
is the homotopy fiber of the inclusion $A \vee B \hookrightarrow A
\times B$. By Puppe's Theorem \cite[Lemma~2]{MR51:1808}, this is the join
$\Omega A \ast \Omega B$.  The same theorem gives weak equivalences
$\Fib(A \vee B \rightarrow A)\simeq B \rtimes \Omega A $ and 
$\Fib(A\vee B \rightarrow B)\simeq A \rtimes \Omega B$.
Thus in  this case, the total fiber of the square $\Omega A \ast \Omega B$ is a retract 
of the join
\[
\Omega(B \rtimes \Omega A) \ast 
\Omega(A \rtimes \Omega B) \simeq \Omega\Fib(A \vee B \rightarrow A)\ast \Omega\Fib(A\vee B \rightarrow B).
\]
This is much stronger than the inequality $T>\Omega\Fib(A \vee B \rightarrow A)\ast \Omega\Fib(A\vee B \rightarrow B)$
guaranteed by Theorem~\ref{thm BMsquare}.
\end{example}

\begin{example}
\label{ex which fibers}
In the previous example, the total
fiber of a homotopy push-out square was first expressed  using the homotopy fibers of
the horizontal and vertical maps to~$D$. This is not to be
expected in general as shown by the following example. Let us choose
an integral homology equivalence $X \rightarrow Y$, for example the
one described by Whitehead in \cite[Example~IV.7.3]{MR516508}, where $Y= S^1$ 
and $X$ is obtained from $S^1 \vee S^2$ by
attaching a single $3$-cell via the attaching map
\[
S^2 \xrightarrow{(2, -1)} S^2 \vee S^2 \hookrightarrow
\bigvee_{-\infty}^\infty S^2 \simeq \widetilde{S^1 \vee S^2}
\rightarrow S^1 \vee S^2.
\]
Here the first map has degree $2$ on the first sphere and degree
$-1$ on the second one, while the second map is the inclusion on the zeroth and first
factors of the infinite wedge. Finally the last map is the universal cover. We then consider the homotopy
push-out square
\[\xymatrix@C=1.5pc@R=1.5pc{
X \ar[r]^-f \ar[d]_-g & S^1 \ar[d]\\
\Delta[0] \ar[r]  & \Delta[0] & *!<2em,0ex>{.} }
\]
where $f$ is the first Postnikov  section.
The join of the loops of the homotopy fibers of $\Delta[0] \rightarrow \Delta[0]$
and $S^1 \rightarrow \Delta[0]$ is contractible, but
the total fiber $T$ is the universal cover $\tilde X$ of $X$, which is not contractible.

The inequality of our main theorem holds, however, since $\Fib(f)= \tilde X$ and $\Fib(g)=X$.
As $\Omega \Fib(g)$  is not connected,  $\overline{\Ccal(\Omega \Fib(f) * \Omega \Fib(g))}=\overline{\Ccal(\Sigma \Omega \tilde X)}$, 
and so  $T=\tilde X$ is killed by $\Omega \Fib(f) * \Omega \Fib(g)$ (see~\cite[Corollary~3.5 (2)]{MR1464865}).
\end{example}

Our last example illustrates both the necessity to deal with
non-connected spaces and the importance to consider all homotopy
fibers at once. It also confirms the usefulness of our convention about
$\OmegaB S^0 = S^{-1}$ to be able to deal with the borderline
cases.

\begin{example}
\label{ex disconected}
{\rm Let $f\colon S^1 \coprod S^1 \to  \Delta[0] \coprod S^1$ be the
disjoint union of the collapse  map and 
the identity map. Let  $g\colon S^1 \coprod S^1\to  S^1$  be the fold map (the identity on both copies
of $S^1$).   Consider the homotopy push-out diagram
\[\xymatrix@C=1.5pc@R=1.5pc{
S^1 \coprod S^1 \ar[r]^-{f} \ar[d]_-{g} & \Delta[0] \coprod S^1 \ar[d]\\
S^1 \ar[r]  & \Delta[0] }
\]
The homotopy pull-back $P$ is the disjoint union $(S^1
\times \Delta[2]) \coprod (S^1 \times S^1)$ and  the total fiber set $\mathcal T$ consists of a contractible space and 
$\Omega S^1$.

The  homotopy  fiber set  $\Fibset(f)$ in this example is equivalent to  $\{S^1, \Delta[0] \}$, whereas $\Fibset(g)$ is equivalent to 
$ \{S^0\}$.
Thus the join $\OmegaB \Fibset(f) * \OmegaB \Fibset(g)$  is equivalent to  the set $\{\Omega S^1, \Delta[0] \}$. Since $\{\Omega S^1, \Delta[0] \}>S^0$,
our Blakers-Massey Theorem tells us here that the total fiber set is killed by $S^0$, 
i.e.~every  space in the total fiber  set is non-empty.
}
\end{example}

\section{Reduction to connected $D$ and horizontal fibers.}
\label{sec horfibers}
The aim of this section is to prove the following  acyclic inequality between horizontal fibers in a homotopy push-out 
square. It  is a generalization of~\cite[Theorem~3.4 (1)]{MR1464865} to non-connected spaces.
The same argument used in the proof of this proposition will be used in the proof of Theorem~\ref{thm BMsquare}
to reduce it to the case where $D$ is a connected space. Thus, we see this section as an important step  along the way
to our main result, but the reader is invited to skip it on first reading if desired.

\begin{proposition}
\label{prop fibersinpushout}
For a homotopy push-out square (\ref{maintheorem}), we have  $\Fibset(k\colon C\to D) > \Fibset(f\colon A\to B)$.
In particular $\OmegaB \Sigma X > X$ for any space~$X$.
\end{proposition}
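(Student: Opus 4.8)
The plan is to prove the inequality $\Fibset(k)>\Fibset(f)$ by first reducing to the case where $D$ is connected, and then, in that case, identifying the fibers of $f$ with quotients built from the fibers of $k$ so that Lemma~\ref{lemma closure} applies. For the reduction step, observe that $\pi_0 f\colon\pi_0 A\to\pi_0 B$ and $\pi_0 k\colon\pi_0 C\to\pi_0 D$ induce the same map of sets after identifying $\pi_0 B$ and $\pi_0 C$ with their images in $\pi_0 D$ (since (\ref{maintheorem}) is a homotopy push-out, $\pi_0 D$ is the push-out of $\pi_0 C\leftarrow\pi_0 A\to\pi_0 B$ in sets). Consequently one may decompose the square over the components of $D$: for each $d\in\pi_0 D$, restrict to the preimages $A_d\subseteq A$, $B_d\subseteq B$, $C_d\subseteq C$, obtaining a homotopy push-out square with lower-right corner $D_d$ connected, and $\Fibset(f)=\bigcup_d\Fibset(f_d)$, $\Fibset(k)=\bigcup_d\Fibset(k_d)$. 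Since $M\cup N>M'\cup N'$ whenever $M>M'$ and $N>N'$, it suffices to treat each $d$ separately; that is, we may assume $D$ connected.

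So assume $D$ is connected. Then $\Fibset(k)$ is (equivalent to) a single space $\Fib(k)$. Pull the whole square back along $\Fib(\mathrm{id}\colon D\to D)$? No — rather, form the homotopy pull-back of (\ref{maintheorem}) over a chosen base point of $D$: replacing $A,B,C$ by $\Fib(A\to D)$, $\Fib(h)=\Fib(B\to D)$, $\Fib(k)=\Fib(C\to D)$ gives a new homotopy push-out square (homotopy pull-backs of a connected base commute with the relevant homotopy colimit here, by the connectedness of $D$ this is Mather's cube theorem / the Puppe pull-back theorem) whose total lower-right corner is contractible. The horizontal map $\Fib(A\to D)\to\Fib(h)$ still has homotopy fiber set equivalent to $\Fibset(f)$, because a homotopy fiber of a composite equals the homotopy fiber of the original map (the outer and inner squares agree), and similarly the left vertical map $\Fib(A\to D)\to\Fib(k)$ has fibers $\Fibset(g)$. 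Thus, after renaming, we are reduced to a homotopy push-out square
\[\xymatrix@C=1.5pc@R=1.5pc{
 A' \ar[r]^{f'} \ar[d] & B' \ar[d]\\
 \Fib(k) \ar[r] & \Delta[0] }\]
i.e.\ $B'\simeq \Sigma(\text{something})$ and, crucially, $A'\simeq \Fib(k)\ast(\text{the other fiber})$ is irrelevant; what matters is that $f'$ is the cofiber map collapsing $\Fib(k)$, so $\Fib_b(f')\simeq (\Fib(k))_?\rtimes\cdots$ — more precisely $f'$ is, up to weak equivalence under $A'$, the homotopy cofiber map $A'\to A'/\Fib(k)$, and its homotopy fibers are killed by $\Fib(k)$ by Lemma~\ref{lemma closure}(a) applied to the fibration sequence replacing this cofiber sequence, or directly: the fiber of a map collapsing a subspace $Z$ is built out of $Z$ via homotopy push-outs. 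Hence $\Fibset(f')>\{\Fib(k)\}$, which is exactly $\Fibset(k)>\Fibset(f)$ after undoing the reductions.

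I expect the main obstacle to be the careful handling of base points and of the cube-lemma step in the reduction to a contractible corner: one must be sure that pulling the square back over a point of the (connected) space $D$ really yields a homotopy push-out square, and that this operation does not disturb the two homotopy fiber sets $\Fibset(f)$ and $\Fibset(g)$ we care about — this is where connectedness of $D$ is used essentially, and where the non-connectedness of the fibers $\Fib(k)$, $\Fib(h)$ forces us to keep working with sets of fibers rather than single spaces throughout. For the final assertion $\OmegaB\Sigma X>X$, apply the proposition to the homotopy push-out square
\[\xymatrix@C=1.5pc@R=1.5pc{
 X \ar[r] \ar[d] & CX \ar[d]\\
 CX \ar[r] & \Sigma X }\]
here $k\colon CX\to\Sigma X$ has $\Fibset(k)$ equivalent to $\{\OmegaB\Sigma X\}$ (by definition of $\OmegaB$, after noting $\Sigma X$ is connected, or vacuously if $X=S^{-1}$) and $f\colon X\to CX$ has $\Fibset(f)$ equivalent to $\{X\}$ up to the component bookkeeping; the proposition then gives $\OmegaB\Sigma X>X$ directly.
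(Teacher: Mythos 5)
Your first reduction (decomposing the square over $\pi_0D$ to assume $D$ connected) is exactly the paper's opening move, and the further reduction to a contractible terminal corner via Mather's cube theorem is legitimate in principle (it is Proposition~\ref{prop reduction}, whose proof does not depend on the present statement). But that second reduction buys you nothing: afterwards you must still show, for a homotopy push-out square $\Delta[0]\simeq\hocolim(C'\xleftarrow{g'}A'\xrightarrow{f'}B')$, that $C'=\Fib(C'\to\Delta[0])$ is killed by $\Fibset(f')$, which is the same kind of statement you started with. At this point your argument breaks down in two independent ways. First, the identification of $f'$ with ``the homotopy cofiber map $A'\to A'/\Fib(k)$'' does not typecheck and is false: $\Fib(k)=C'$ is the \emph{target} of $g'\colon A'\to C'$, not a subspace of $A'$, and a homotopy push-out square with contractible corner only records that $\hocolim(C'\leftarrow A'\to B')$ is contractible; it does not exhibit $B'$ as a quotient of $A'$ by anything. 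Second, your conclusion $\Fibset(f')>\{\Fib(k)\}$ is the \emph{reverse} of what is claimed: Proposition~\ref{prop fibersinpushout} asserts $\Fibset(k)>\Fibset(f)$, i.e.\ that the fibers of the \emph{bottom} map are killed by the fibers of the \emph{top} map --- as in the model instance $\OmegaB\Sigma X>X$, where $\Omega\Sigma X$ is built from $X$ (James filtration) and not the other way around. So the heart of the proof is missing.

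The ingredient you need is a fiberwise homotopy-colimit theorem. The paper realizes $k\colon C\to D$ as $\hocolim_I\phi$ for a natural transformation $\phi\colon F\to G$ of diagrams indexed on the Grothendieck construction $I=\mathrm{Gr}(\pi_0C\leftarrow\pi_0A\to\pi_0B)$; morally, $\phi$ is the map of push-out data $(C\xleftarrow{g}A\xrightarrow{\id}A)\to(C\xleftarrow{g}A\xrightarrow{f}B)$ given by $(\id_C,\id_A,f)$, whose componentwise homotopy fibers are contractible or retracts of spaces in $\Fibset(f)$. Then \cite[Theorem~9.1]{MR1397724} yields $\Fibset(\hocolim_I\phi)>\Fibset(f)$, and $\hocolim_I\phi$ is weakly equivalent to $k$. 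The $\pi_0$ bookkeeping --- first disposing of the case where $\pi_0f$ is not a bijection, which costs nothing because a disconnected fiber kills every non-empty space by Example~\ref{ex connectivitiy} --- is precisely what makes the hypotheses of that theorem hold, and no step of your proposal plays this role. Your derivation of the ``in particular'' clause from the main statement, via the suspension square, is fine.
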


\begin{proof}
If $D$ is empty, then the whole square~(\ref{maintheorem}) consists of empty spaces and there is nothing to check.
Assume that $D$ is non-empty. For $d$ in $\pi_0D$ define $B_0\subset B$, $C_0\subset C$ and $A_0\subset  A$ 
to be the  preimages  of the connected component $D_d$ corresponding to $d$
along respectively  $h$, $k$ and $hf$. In this way we obtain a homotopy push-out square:
\begin{equation}\label{connectedbasicpush-out}\xymatrix@C=1.5pc@R=1.5pc{
 A_0 \ar[r]^{f_0} \ar[d]_{g_0} & B_0 \ar[d]^{h_0}\\
 C_0 \ar[r]^{k_d} & D_d}
\end{equation}
where the maps are the restrictions of  the corresponding maps from (\ref{maintheorem}).
The fibers of the maps in (\ref{maintheorem}) are the sums over $\pi_0D$ of the corresponding  
fibers of the square (\ref{connectedbasicpush-out}) and the same is true for total fibers. 
Therefore, the inequality we are looking for holds for 
the square (\ref{maintheorem}) if and only if, for
every connected component $d$ in $\pi_0D$, the same inequality holds for  the square 
(\ref{connectedbasicpush-out}). We can thus assume that $D$ is not only non-empty but also connected.

Connectedness of $D$  implies  $\pi_0f\colon\pi_0A\to \pi_0B$ is an epimorphism. If   $\pi_0f$ is not a bijection, then
one of the fibers in $\Fib(f)$ has more that one component. In this case the desired inequality 
holds as  any non-empty  space  is killed by a space with more than one component (see~\ref{ex connectivitiy}.(b)). We can thus assume that $\pi_0f\colon\pi_0A\to \pi_0B$ is a bijection. 
Next, think  about the maps $\pi_0f\colon\pi_0A\to \pi_0B$ and $\pi_0g\colon\pi_0A\to \pi_0C$ as functors between
discrete categories and form the Grothendieck construction (\cite[Section~38]{MR1879153}):
\[
I\defas\text{Gr}(\pi_0C\xleftarrow{\pi_0g}\pi_0A\xrightarrow{\pi_0f} \pi_0B)
\]
Define two functors $F,G\colon I\to \text{Spaces}$ as follows. On objects:
\[
F(i)\defas\begin{cases}
C_c &\text{ if }i=c\in\pi_0C\\
A_a &\text{ if }i=a\in\pi_0A\\
A_a &\text{ if } i=\pi_0f(a)\in\pi_0B
\end{cases}\ \ \ 
G(i)\defas\begin{cases}
C_c &\text{ if }i=c\in\pi_0C\\
A_a &\text{ if }i=a\in\pi_0A\\
B_b &\text{ if } i=b\in\pi_0B
\end{cases}
\]
On morphisms, the maps $F(a\to \pi_0g(a)), G(a\to \pi_0g(a))\colon  A_a\to C_{\pi_0g(a)}$  are equal and given by 
 the restriction of $g$, the map
$F(a\to \pi_0f(a))\colon  A_a\to A_{a}$  is set to be the identity, and 
$G(a\to \pi_0f(a))\colon  A_a\to B_{\pi_0f(a)}$ to  be the restriction of $f$. Note that the identity maps  and the restrictions of $f$ define a natural transformation $\phi\colon F\to G$. In this way we obtain two functors $F$ and $G$ 
whose values are connected, and a natural transformation $\phi\colon F\to G$ for which  $\Fib(\phi_i)$ is either
contractible or is a retract of a space in $\Fibset(f)$. These fibers belong therefore  to $\overline{\Ccal( \Fibset(f))}$ 
and we can then use~\cite[Theorem~9.1]{MR1397724} to conclude that $\Fibset(\text{hocolim}_I\phi)> \Fibset(f)$.
Since  the map $\text{hocolim}_I\phi$ is weakly equivalent to $k\colon C\to D$, we get  the inequality we aimed to prove.
\end{proof}

\section{Reduction to contractible $D$}
\label{sec reduction}
Homotopy push-out diagrams in which the terminal object is contractible
are easier to handle because the homotopy pull-back one needs to
form in order to compute the total fiber is simply a product. The
aim of this section is to reduce the Blakers-Massey theorem to this
situation, which Klein and Peter call a fake wedge in \cite{MR3192617}. 

\begin{proposition}
\label{prop reduction}
If in  the homotopy push-out square~(\ref{maintheorem})  the space $D$ is connected, 
then the spaces\/ $\Fib(hf)$, $\Fib(h)$, and\/ $\Fib(k)$ fit into a homotopy push-out square:
\begin{equation}\label{pointhopushput}\xymatrix@C=1.5pc@R=1.5pc{
\Fib(hf) \ar[r]^{f'} \ar[d]_{g'} & \Fib(h) \ar[d]\\
\Fib(k) \ar[r]  & \Delta[0]}
\end{equation}
with the following properties:
\begin{itemize}
\item{} $\Fibset(f')$ is equivalent to $\Fibset(f)$;
\item   $\Fibset(g')$ is equivalent to $\Fibset(g)$;
\item{} the total fiber set ${\mathcal T}'$ of~(\ref{pointhopushput}) is
equivalent to the total fiber set ${\mathcal T}$ of (\ref{maintheorem}).
\end{itemize}
\end{proposition}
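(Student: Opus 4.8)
The plan is to produce the square~(\ref{pointhopushput}) by applying the homotopy-fiber functor (over a chosen base point of $D$) to the square~(\ref{maintheorem}), and then to argue that this operation preserves the homotopy push-out property as well as all the fibers in sight. First I would fix a base point $d\colon\Delta[0]\to D$; since $D$ is connected, $\Fibset(h)$, $\Fibset(k)$, and $\Fibset(hf)$ are all equivalent to one-element sets, so the notations $\Fib(h)$, $\Fib(k)$, $\Fib(hf)$ make sense, and $\Fib(hf)$ computed over $d$ is also (a model for) $\Fib(g')\to\ldots$; here one uses that the composite $A\xrightarrow{f}B\xrightarrow{h}D$ has homotopy fiber $\Fib(hf)$ fitting over both $\Fib(h)$ and $\Fib(k)$, giving the maps $f'$ and $g'$ in the square. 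Taking the fiber of the constant map $D\to D$ at $d$ yields $\Delta[0]$ in the lower-right corner, so the square~(\ref{pointhopushput}) is indeed the fiberwise square of~(\ref{maintheorem}) over $d$.

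The key step is that forming homotopy fibers over a point preserves homotopy push-outs. The clean way to see this is to model $h\colon B\to D$, $k\colon C\to D$, and $hf\colon A\to D$ as fibrations over $D$ (replacing the square~(\ref{maintheorem}) by a weakly equivalent one in which all three maps to $D$ are fibrations and $A\to B$, $A\to C$ are the induced maps of total spaces over $D$, still a homotopy push-out), and then to invoke the fact that pulling back along $d\colon\Delta[0]\to D$ — i.e.\ taking the strict fiber — is a left adjoint composed with a right adjoint on the slice $\mathrm{Spaces}/D$ but, crucially, that the base change $d^{*}$ along any map into $D$ preserves homotopy colimits of diagrams of fibrations over $D$. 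Concretely: the homotopy push-out of $B\leftarrow A\to C$ over $D$ is computed fiberwise because homotopy push-outs of spaces over $D$ are computed objectwise in each fiber when everything is fibrant over $D$; equivalently, one uses that $\Fib_d(-)$ is a homotopy-colimit-preserving functor by the gluing lemma / the fact that $\Fib_d$ sends the pushout $B\cup_A C\to D$ to $\Fib_d(B)\cup_{\Fib_d(A)}\Fib_d(C)$. Since $Q\simeq D$ is the homotopy push-out of $B\leftarrow A\to C$ in~(\ref{maintheorem}) with structure map the identity of $D$, its fiber over $d$ is $\Delta[0]$, and the fiber square is a homotopy push-out.

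It then remains to verify the three bullet points. For the first two: the fiber of $f'\colon\Fib_d(hf)\to\Fib_d(h)$ over a point $b$ lying over $d$ is, by a standard iterated-pullback argument, weakly equivalent to $\Fib_b(f)$, and as $b$ ranges over the preimage in $\pi_0 B$ of $d$ one obtains all of $\Fibset(f)$ up to equivalence — here I would use that $\pi_0 h\colon\pi_0 B\to\pi_0 D=\ast$ is trivially surjective, so the remark after Definition in~\ref{fibers} about needing $\pi_0 h$ epimorphic applies and $\Fibset(f')\simeq\Fibset(f)$; symmetrically $\Fibset(g')\simeq\Fibset(g)$. For the third bullet, the total fiber set of~(\ref{maintheorem}) is $\Fibset(A\to P)$ where $P=B\times_D C$ is the homotopy pull-back; taking fibers over $d$ commutes with homotopy pull-backs (both are homotopy limits), so $\Fib_d(P)\simeq\Fib_d(B)\times\Fib_d(C)$ is precisely the homotopy pull-back $P'$ of~(\ref{pointhopushput}), and the induced map $\Fib_d(A)\to\Fib_d(P)$ is the comparison map for~(\ref{pointhopushput}); hence $\mathcal T'=\Fibset(\Fib_d(A)\to P')$ has the same fibers as $\mathcal T=\Fibset(A\to P)$, once one notes that a base point of $\mathcal T'$ sits over $d$ and the fiber of $A\to P$ over the corresponding component is the same space. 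The main obstacle I anticipate is the bookkeeping in this last point when $P$ (equivalently $\Fib(k)$, $\Fib(h)$) is disconnected: one must be careful that "the" total fiber set is an equivalence class of sets and check the indexing by $\pi_0$ matches up after base change, rather than glossing over it with the symbol $T$; the earlier discussion of homotopy fiber sets and the epimorphism condition on $\pi_0$ in~\ref{fibers} is exactly what makes this go through.
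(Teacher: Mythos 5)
Your proposal is correct and follows essentially the same route as the paper: you pull the square back over a contractible space mapping to $D$ (the paper uses a fibration $P\twoheadrightarrow D$ with $P$ contractible and invokes Mather's second cube theorem, which is exactly your ``base change along a fibration preserves homotopy push-outs'' step), and then identify the fiber sets and total fiber sets via surjectivity on $\pi_0$. No gaps.
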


\begin{proof}
Choose a fibration $P\twoheadrightarrow D$ with contractible $P$ and pull-back 
the square  (\ref{maintheorem}) along this map to form the following commutative cube:
\[\xymatrix@C=1.5pc@R=1.5pc{
&\Fib(h)\ar@{->>}[rr] \ar[dd]|\hole & & C\ar[dd]^-{h}\\
\Fib(hf)\ar@{->>}[rr] \ar[ur]^{f'}\ar[dd]_-{g'} & & A\ar[dd]_(.25){g}\ar[ur]^{f}\\
& P \ar@{->>}[rr]|(.55)\hole& & D \\
\Fib(k)\ar@{->>}[rr]\ar[ur] & & B\ar[ur]^-{k}
}\]
According to Mather's second Cube Theorem
\cite[Theorem~25]{MR53:6510}, the face in this cube containing $f'$ and $g'$ is a homotopy push-out square.
Since $P$ is contractible, the square:
\[\xymatrix@C=1.5pc@R=1.5pc{
\Fib(hf) \ar[r]^{f'} \ar[d]_{g'} & \Fib(h) \ar[d]\\
\Fib(k) \ar[r]  & \Delta[0]}
\]
is a homotopy push-out.  As the map $P\twoheadrightarrow D$ induces an epimorphism on $\pi_0$,
so do the maps $ \Fib(h)\twoheadrightarrow C$ and $ \Fib(k)\twoheadrightarrow B$.
This implies that the set $\Fibset(f)$ is equivalent to $\Fibset(f')$ and $\Fibset(g)$ is equivalent to $\Fibset(g')$.
Exactly the same argument gives an equivalence between the total fiber set of the former square with the total 
fiber set $\mathcal T$ of the square~(\ref{maintheorem}).
\end{proof}

\section{Cofibrations}
\label{sec cofibrations}
Consider a map $A \rightarrow X$ to a connected space $X$ and its homotopy fiber $F$. 
In this section, we give an
estimate for the total fiber set $\mathcal T$ of the following 
homotopy push-out square:
\begin{equation}\label{cofibration}
\xymatrix@C=1.5pc@R=1.5pc{
A \ar[r] \ar[d] & X \ar[d]\\
CA \ar[r]  & X/A}
\end{equation}

\begin{proposition}
\label{prop cof}
If  $X$ is connected, then 
${\mathcal T} > \bigl\{\OmegaB F * \OmegaB F,\, \OmegaB(F * \Omega X)\bigr\}$.
\end{proposition}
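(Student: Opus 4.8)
My plan is to reduce to the case where $F$ is connected, identify $\mathcal T$ with the homotopy fiber set of a single map of loop spaces, factor that map through the James map, and estimate the two resulting pieces separately using Lemma~\ref{lemma loop-suspension} and Proposition~\ref{prop fibersinpushout}. The degenerate case is immediate: if $F$ is not connected (in particular if $F=S^{-1}$) then $\OmegaB F=S^{-1}$, so $\OmegaB F\ast\OmegaB F=S^{-1}$, the set on the right contains the empty space, and by~\ref{point acyclic set} it kills every space. So assume $F$ is connected and non-empty. Then the fibration sequence $F\to A\to X$ forces $A$ to be connected and $\pi_1A\to\pi_1 X$ to be onto, so by van Kampen the cofiber $X/A$ is simply connected; in particular $\OmegaB(X/A)=\Omega(X/A)$, and the homotopy pull-back of $X\to X/A\leftarrow CA$ as well as every homotopy fiber occurring below are connected.

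Since $X$ and $X/A$ are connected, Lemma~\ref{lemma total} applied to~(\ref{cofibration}) identifies $\mathcal T$ (up to equivalence) with the homotopy fiber set of the natural map $\psi\colon F=\Fib(A\to X)\to\Fib(CA\to X/A)=\OmegaB(X/A)$. By the suspension--loop adjunction of~\ref{convention empty}, $\psi=(\OmegaB\psi^\sharp)\,\eta$ in $\Ho(\text{Spaces}_*)$ for a unique map $\psi^\sharp\colon\Sigma F\to X/A$, where $\eta\colon F\to\OmegaB\Sigma F$ is the James map; hence Lemma~\ref{lemma closure}(b) gives
\[
\mathcal T>\Fibset(\eta)\cup\Fibset(\OmegaB\psi^\sharp).
\]
Lemma~\ref{lemma loop-suspension} bounds the first term, $\Fibset(\eta)>\OmegaB F\ast\OmegaB F$. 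For the second, looping commutes with homotopy fibers, so $\Fibset(\OmegaB\psi^\sharp)$ is equivalent to $\OmegaB\Fibset(\psi^\sharp)$, and since $\OmegaB$ preserves acyclic inequalities it is enough to prove $\Fibset(\psi^\sharp)>F\ast\Omega X$; the two bounds then combine to the asserted inequality.

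To estimate $\Fibset(\psi^\sharp)$ I would realise $\psi^\sharp$ inside a homotopy push-out. Let $G_1\defas A\cup_F CF$ be the homotopy cofiber of the fiber inclusion $F\to A$ --- Ganea's construction for $f$ --- with its canonical map $\bar f\colon G_1\to X$ restricting to $f$ on $A$. Collapsing $A$ yields $G_1/A\simeq\Sigma F$, and since $A\hookrightarrow G_1\xrightarrow{\bar f}X\to X/A$ is nullhomotopic, the push-out of $\Sigma F\leftarrow G_1\xrightarrow{\bar f}X$ is the homotopy cofiber of $A\hookrightarrow G_1\xrightarrow{\bar f}X$, i.e.\ $X/A$; thus
\[
\xymatrix@C=1.5pc@R=1.5pc{G_1\ar[r]^-{\bar f}\ar[d] & X\ar[d]\\ \Sigma F\ar[r]^-{\beta} & X/A}
\]
is a homotopy push-out, and a comparison of this square with the suspension square $\begin{smallmatrix}F&\to&CF\\ \downarrow&&\downarrow\\ CF&\to&\Sigma F\end{smallmatrix}$ --- whose comparison map to its homotopy pull-back is exactly $\eta$ --- shows that $\beta=\psi^\sharp$. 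Now Proposition~\ref{prop fibersinpushout} applied to the push-out above gives $\Fibset(\psi^\sharp)=\Fibset(\beta)>\Fibset(\bar f)$, and $\Fibset(\bar f)=\{F\ast\Omega X\}$ by Ganea's theorem; alternatively one pulls back the push-out defining $G_1$ along the path fibration $PX\to X$ and applies Mather's second cube theorem, the bottom face becoming $\hocolim(F\leftarrow F\times\Omega X\to\Omega X)\simeq F\ast\Omega X$. Hence $\Fibset(\psi^\sharp)>F\ast\Omega X$, and we are done.

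The step I expect to cost the most is the identity $\beta=\psi^\sharp$: it amounts to matching the factorization produced by the abstract suspension--loop adjunction with the concrete Ganea push-out square. I would handle it by exhibiting $\eta$ and $\psi$ in terms of the comparison maps of the two squares above to their homotopy pull-backs and producing a natural map between the squares that induces $\psi^\sharp$ on push-outs. The rest --- the degenerate reduction, the various connectivity checks, and the equivalence of $\Fibset(\OmegaB\psi^\sharp)$ with $\OmegaB\Fibset(\psi^\sharp)$ --- is routine given the lemmas already at hand.
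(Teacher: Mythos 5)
Your argument is correct and follows essentially the same route as the paper: reduce to the case of connected $F$, identify $\mathcal T$ with the homotopy fiber set of $F\to\OmegaB(X/A)$, factor that map through the James map $\eta$, bound $\Fibset(\eta)$ by Lemma~\ref{lemma loop-suspension}, and bound the fibers of the adjunct by Proposition~\ref{prop fibersinpushout} applied to the push-out square $\Sigma F\leftarrow A/F\to X$ (your $G_1$ is exactly the paper's $A/F$). The only difference is that you make explicit the identification of the comparison map of the Ganea square with the abstract adjunct $\psi^\sharp$, a point the paper leaves implicit.
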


\begin{proof}
If~$F$ is not connected then $\OmegaB F\ast\OmegaB F $ is the empty space by our  
convention (see~\ref{convention empty}) and the claim is trivial. So we assume that~$F$ is connected.
This implies that $A$ is also connected, and then so is $X/A$. Hence a choice of a base point
in $F$ turns this situation into a pointed one.
The total fiber set $\mathcal T$ is then the homotopy fiber of the  induced map 
$\alpha\colon F \to \Omega(X/A)$, which factors through $\eta\colon F \to \Omega\Sigma F$ as $\alpha = (\Omega \alpha^\sharp)\circ \eta$, 
where $\alpha^\sharp\colon \Sigma F \to X/A$ is the adjunct of~$\alpha$ (see Section~\ref{convention empty}). 
Using~\ref{lemma closure}.(b), we then obtain
$\mathcal T=\Fibset(\alpha)=\Fibset((\Omega \alpha^\sharp)\circ \eta)>\Fibset(\Omega \alpha^\sharp)\cup \Fibset( \eta)$.

According to~\ref{lemma loop-suspension}, $ \Fib( \eta)>\Omega F\ast\Omega F.$
The  adjunct map $\alpha^\sharp$ fits into the 
following commutative diagram, where all the squares are homotopy push-outs:
\[
\xymatrix@C=1.5pc@R=1.5pc{
A \ar[r] \ar[d] & A/F \ar[r] \ar[d] & X \ar[d]\\
CA \ar[r]  & \Sigma F \ar[r]^-{\alpha^\sharp} & X/A & *!<2em,.5ex>{.}}
\] 
By Proposition~\ref{prop fibersinpushout}, 
$\Fib( \alpha^\sharp)>\Fib(A/F\to X)\simeq  F \ast \Omega X$ which yields
$ \Fib( \Omega\alpha^\sharp)>\Omega(F \ast \Omega X)$. These two relations give the desired inequality.
\end{proof}

\section{A  rough estimate}
\label{sec rough}
In this section, we obtain a first, rather  rough estimate for the total
fiber. By combining this seemingly  weak  estimate with our
results for cofibration sequences, we will be able to prove  Theorem~\ref{thm BMsquare} in Section~\ref{sec maintheorem}. 

Throughout this section let us    fix a homotopy push-out square of the form:
\[
\xymatrix@C=1.5pc@R=1.5pc{
A \ar[r]^{f} \ar[d]_{g} & B \ar[d]\\
C \ar[r]  & \Delta[0]}
\]
In the case $B$ is connected we use the symbol $F\to A$ to denote the homotopy fiber map of $f$
over the unique component of $B$ (see~\ref{fibers}). Similarily, if $C$ is connected we use the symbol $G\to A$ to denote the homotopy fiber map of $g$ over the unique component of $C$. 
 The total fiber set $\mathcal T$ of the square above is by definition
the homotopy fiber set of the map $(f,g)\colon A \rightarrow B \times C$. By Lemma~\ref{lemma total},
when $B$ is connected,  this total fiber set can be  alternatively described as the homotopy fiber set 
of the map $\alpha\colon F\to C$ which is the composite of the homotopy fiber map $F\to A$ and $g$.

\begin{lemma}
\label{lemma 2-connected}
If $B$ and $C$ are connected, then the homotopy cofiber $C/F$ of the
map  $\alpha\colon F\to  C$ is killed by $F \ast \Omega B$.
In particular,  $C/F$  is $2$-connected if $F$ is $1$-connected.
\end{lemma}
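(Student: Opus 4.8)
The plan is to exploit the two homotopy push-out squares that sit inside the situation at hand. By Lemma~\ref{lemma total}, since $B$ and $C$ are connected, the total fiber set $\mathcal T$ is the homotopy fiber set of $\alpha\colon F\to C$, so $C/F$ is genuinely the homotopy cofiber of $\alpha$. First I would build the standard ``fiber-cofiber'' factorization: the homotopy fiber $F\to A$ sits in a cofiber sequence $F\to A\to A/F$, and stacking the defining push-out square of $A/F$ on top of the given push-out square $A\to B$, $A\to C$, $\ldots\to\Delta[0]$ produces a diagram
\[
\xymatrix@C=1.5pc@R=1.5pc{
F \ar[r] \ar[d] & A \ar[r] \ar[d] & B \ar[d]\\
CF \ar[r]  & A/F \ar[r] & \Delta[0]}
\]
in which both small squares, hence the outer rectangle, are homotopy push-outs. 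Reading off the outer rectangle, $A/F\simeq C$ (up to the identification coming from $C$ being the push-out of $\Delta[0]\leftarrow A\to\Delta[0]$ modulo the image of $F$), and the map $A/F\to\Delta[0]$ realized inside this picture is compatible with $\alpha$; in other words $C/F$ is obtained from the right-hand square, and the left-hand square exhibits $C$ as a homotopy cofiber $A/F$ of the map $F\to A$.

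Next I would invoke Proposition~\ref{prop cof}, which is exactly tailored to this configuration: applied to the map $F\to A$ with $A$ connected (which holds, since $F$ is connected forces $A$ connected, as in the proof of Proposition~\ref{prop cof}), it tells us that the total fiber set of the push-out square
\[
\xymatrix@C=1.5pc@R=1.5pc{
F \ar[r] \ar[d] & A \ar[d]\\
CF \ar[r]  & A/F}
\]
is killed by $\{\OmegaB(\OmegaB F)\ast\OmegaB(\OmegaB F),\ \OmegaB(\OmegaB F\ast\Omega A)\}$. But that is the wrong square: here I instead want to compare $C/F=$ the cofiber of $\alpha\colon F\to C$, not the cofiber of $F\to A$. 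The right move is to apply Proposition~\ref{prop fibersinpushout} (the horizontal-fibers inequality) to the right-hand square of the stacked diagram: there the left vertical map $A\to A/F\simeq C$ has homotopy cofiber-type data controlled by $F$, and Proposition~\ref{prop fibersinpushout} gives $\Fibset(C\to\Delta[0])>\Fibset(A\to B)$, i.e. $\Fibset(\Omega C)>\Fibset(f)=F$; this is not yet what we want either. The cleanest route is: apply Proposition~\ref{prop fibersinpushout} to the square with corners $F,A,CF,C$ (where $A\to B$ is replaced by $F\to CF$ being the cone inclusion and the bottom is $C\to\Delta[0]$) — more precisely, $C/F$ is the cofiber of $\alpha$, and $\alpha$ factors as $F\to A\to C$ where $F\to A$ is the fiber inclusion and $A\to C$ is $g$ composed with the fold; so $C/F$ is the cofiber of a composite, and I can analyze it via the two-step push-out $F\to A\to C$, $\Delta[0]\to A/F\to C/F$.

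The technical heart, and the step I expect to be the main obstacle, is identifying $\Fib(A/F\to X)$ in the notation of Proposition~\ref{prop cof}'s proof with $F\ast\Omega B$ in our notation. In the proof of Proposition~\ref{prop cof} the analogous term is $\Fib(A/F\to X)\simeq F\ast\Omega X$, obtained from Puppe's theorem (Example~\ref{ex product})/Proposition~\ref{prop fibersinpushout} applied to the square $F\to A$, $\ldots$, $\Delta[0]\to X$. Here the role of ``$X$'' is played by $B$ and the role of ``$F$'' by $F=\Fib(f)$: indeed the given square has $A\to B$ with fiber $F$ and $A\to C\to\Delta[0]$, and collapsing $F$ on the cone side produces the square $F\to A$, $CF\to A/F\simeq C$, $\ldots$, $\Delta[0]\to B$ whose horizontal-fiber comparison (Proposition~\ref{prop fibersinpushout}) yields $\Fibset(C\to\Delta[0]) = \Fibset(\Omega C)$ on one side and $\Fib(A/F\to B)$ matching $F\ast\Omega B$ on the other via Puppe. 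So concretely: from the homotopy push-out $CF\leftarrow F\to A$ with $A\to B$ having fiber $F$, Proposition~\ref{prop fibersinpushout} applied to the square $\bigl(F\to A,\ F\to CF,\ A\to B,\ CF\to \Sigma F\bigr)$ gives that the fiber of $\Sigma F\to X/A$-analogue — here $C/F$ — is killed by the fiber of $A/F\to B$, and Puppe identifies that last fiber as $F\ast\Omega B$. Threading $\OmegaB$ through (legitimate since $B$, $C$ connected) and using Lemma~\ref{lemma closure}.(b) for the composite $\alpha$ then delivers $C/F > F\ast\Omega B$ exactly. Finally, the ``in particular'' is immediate: if $F$ is $1$-connected then $F\ast\Omega B$ is at least $1$-connected (since $\Omega B$ is non-empty, i.e.\ $(-1)$-connected, and the join adds connectivities plus $2$), hence $2$-connected, and by Example~\ref{ex connectivitiy} being killed by a $2$-connected space makes $C/F$ itself $2$-connected.
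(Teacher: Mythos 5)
Your proposal does land on the two ingredients the paper itself uses --- the Ganea/Puppe identification $\Fib(A/F\to B)\simeq F\ast\Omega B$ and an application of Proposition~\ref{prop fibersinpushout} to a push-out square obtained by collapsing $F$ --- but the square to which that proposition must be applied, namely
\[\xymatrix@C=1.5pc@R=1.5pc{
A/F \ar[r] \ar[d] & B \ar[d]\\
C/F \ar[r]  & \Delta[0]}
\]
is never correctly exhibited, and the claims you put in its place are false. In your first stacked diagram the right-hand square, with corners $A$, $B$, $A/F$, $\Delta[0]$, is \emph{not} a homotopy push-out: its homotopy push-out is $A/F\cup_A B\simeq CF\cup_F B\simeq B\vee\Sigma F$, which is not contractible in general. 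The ensuing assertion $A/F\simeq C$ is also false; already for the push-out square $X\xleftarrow{\ \mathrm{id}\ }X\to\Delta[0]$ (so $B=\Delta[0]$, $C=X$) one has $F\simeq X$ and $A/F\simeq\Delta[0]$, whereas $C=X$. Finally, the ``square $\bigl(F\to A,\ F\to CF,\ A\to B,\ CF\to\Sigma F\bigr)$'' to which you ultimately apply Proposition~\ref{prop fibersinpushout} is not a commutative square with identifiable corners, so the step that is supposed to deliver $C/F>\Fib(A/F\to B)$ has no justification as written.

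What is true, and what the proof actually needs, is that the displayed square above is a homotopy push-out: the map $A/F\to C/F$ is induced by $g$ (note $C/F\simeq C\cup_A A/F$), the map $A/F\to B$ extends $f$ over the cone using the null-homotopy of $F\to A\to B$, and the push-out property follows by commuting the homotopy push-out $\hocolim(C\leftarrow A\to B)\simeq\Delta[0]$ with the cofiber construction along $F\to A$ (a $3\times3$ interchange of homotopy push-outs), not by pasting the two squares of your diagram. Once this square is in place, Proposition~\ref{prop fibersinpushout} gives $C/F=\Fib(C/F\to\Delta[0])>\Fib(A/F\to B)\simeq F\ast\Omega B$ in one step; no factorization of $\alpha$, no appeal to Lemma~\ref{lemma closure}.(b), and no ``threading of $\OmegaB$'' is needed. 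Your handling of the ``in particular'' clause (connectivity of the join when $F$ is $1$-connected and $\Omega B$ is non-empty) is correct.
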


\begin{proof}
We have a homotopy push-out square
\[\xymatrix@C=1.5pc@R=1.5pc{
A/F \ar[r] \ar[d] & B \ar[d]\\
C/F \ar[r]  & \Delta[0] & *!<2.2em,.4ex>{.}}
\]
Therefore, we infer from Proposition~\ref{prop fibersinpushout} that $C/F$ is killed by $\Fib(A/F \rightarrow B) \simeq
F \ast\Omega B$. If $F$ is  $1$-connected and $B$ is connected, this join is
$2$-connected.
\end{proof}

Here is our ``rough estimate''. The roughness of this cellular
inequality comes from the fact that it only involves one of the fibers. As we
know from the classical version of the Blakers-Massey Theorem, the connectivity
of the total fiber should be related to the sum of the connectivities of \emph{both} fibers.

\begin{proposition}
\label{prop rough}
If~$B$ and~${\mathcal T}$ are connected, then ${\mathcal T} > \Sigma\OmegaB F$.
\end{proposition}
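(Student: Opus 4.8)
The statement asks us to show $\mathcal T > \Sigma\OmegaB F$ when $B$ and $\mathcal T$ are connected, in the situation of a homotopy push-out square with contractible terminal vertex. The plan is to exploit the alternative description of the total fiber: by Lemma~\ref{lemma total}, since $B$ is connected, $\mathcal T$ is the homotopy fiber set of the composite $\alpha\colon F\to A\to C$, where $F = \Fib(f)$. The connectivity hypothesis on $\mathcal T$ forces some component of $C$ to carry the relevant behaviour; I would first reduce to $C$ connected by restricting $g$ over the appropriate component, after which $\alpha\colon F\to C$ is a map of connected (or at least manageable) spaces and $\mathcal T = \Fib(\alpha)$ sits in a fibration sequence $\mathcal T\to F\xrightarrow{\alpha} C$. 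Since $\mathcal T$ is assumed connected, $F$ must be connected too (a disconnected $F$ mapping to connected $C$ would give disconnected fibers), so $\OmegaB F = \Omega F$ genuinely.

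The heart of the argument is then to feed the fibration sequence $\mathcal T\to F\to C$ into Lemma~\ref{lemma closure}. First I would apply Lemma~\ref{lemma closure}(a) to the map $\alpha\colon F\to C$: this gives $F > \Fibset(\alpha)\cup\{C\} = \mathcal T\cup\{C\}$. That is the wrong direction — it expresses $F$ in terms of $\mathcal T$, not the reverse. Instead, the right tool is the James map and the suspension of the fiber. The key observation is that $C/F$, the homotopy cofiber of $\alpha$, is controlled: by Lemma~\ref{lemma 2-connected} it is killed by $F\ast\Omega B$, hence (when $F$ is $1$-connected) it is $2$-connected, and in general it is at least simply connected because $F\ast\Omega B$ is. So I would consider the cofiber sequence $F\to C\to C/F$ and loop it, or rather work with the fibration-type comparison: there is a natural map $C\to \Fib(C\to C/F)$... but more directly, I would use that the total fiber $\mathcal T = \Fib(\alpha)$ and the cofiber $C/F$ are related by a Blakers-Massey-type or EHP-type comparison.

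The cleaner route, and the one I expect the authors take, is this: $\mathcal T$ is the total fiber of the homotopy push-out square
\[\xymatrix@C=1.5pc@R=1.5pc{
F \ar[r] \ar[d] & \Delta[0] \ar[d]\\
C \ar[r]  & C/F & *!<2em,.5ex>{,}}\]
(or rather, $\mathcal T = \Fib(F\to \Fib(\Delta[0]\to C/F)) = \Fib(F\to \Omega(C/F))$ after a shift, using that $C/F$ is simply connected so $\Omega(C/F)$ is well-behaved). Now the map $F\to \Omega(C/F)$ factors through the James map $\eta\colon F\to \OmegaB\Sigma F$ followed by $\Omega$ of the adjunct $\Sigma F\to C/F$. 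By Lemma~\ref{lemma closure}(b), $\mathcal T = \Fibset(F\to\Omega(C/F)) > \Fibset(\eta)\cup\Fibset(\Omega(\text{adjunct}))$. By Lemma~\ref{lemma loop-suspension}, $\Fibset(\eta) > \Omega F\ast\Omega F$. The fiber of $\Omega$ of the adjunct is $\Omega$ of the fiber of the adjunct $\Sigma F\to C/F$, which by Proposition~\ref{prop fibersinpushout} applied to the push-out defining $C/F$ is killed by... and here everything combines to give a bound by $\Sigma\OmegaB F = S^0\ast\OmegaB F$, which dominates both $\Omega F\ast\Omega F$ and the other term since $\Sigma\Omega F = S^0\ast\Omega F$ kills $\Omega F\ast\Omega F$ (as $\Omega F$ is connected, so $\Omega F\ast\Omega F$ is $1$-connected while $\Sigma\Omega F$ is only $0$-connected). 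The main obstacle I anticipate is the bookkeeping around base points and the precise identification of which push-out produces $C/F$ as a cofiber and how its fiber is estimated — in particular justifying that the \emph{unlooped} adjunct $\Sigma F\to C/F$ has fiber killed by something whose loop is killed by $\Sigma\OmegaB F$, which is where the connectivity of $C/F$ (hence of $\Omega B$, hence ultimately the connectivity of $B$) is genuinely used; this is the step that makes the estimate "rough" because it throws away the contribution of the second fiber $G$ entirely.
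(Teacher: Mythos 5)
Your argument has a genuine gap at its central step: the identification of $\mathcal T$ with the total fiber of the square
\[
\xymatrix@C=1.5pc@R=1.5pc{F\ar[r]\ar[d]_-{\alpha} & \Delta[0]\ar[d]\\ C\ar[r] & C/F}
\]
is false. The total fiber of that square is $\Fib\bigl(F\to \Fib(C\to C/F)\bigr)$ --- call the target $H$ and the map $\beta$ --- whereas $\mathcal T=\Fib(\alpha\colon F\to C)$. The two differ by the factor $\Fib(H\to C)\simeq\Omega(C/F)$, and there is no natural map $F\to\Omega(C/F)$ with fiber $T$: the canonical null-homotopy of the composite $F\to C\to C/F$ produces precisely the lift $\beta$ into $H=\Fib(C\to C/F)$, not a map into the loop space, so the James factorization you propose does not type-check. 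The paper's proof instead factors $\alpha$ as $F\xrightarrow{\beta}H\to C$, so that $T>\Fib(\beta)\cup\Fib(H\to C)$, and must bound \emph{both} pieces by $\Sigma\Omega F$: the piece $\Fib(H\to C)\simeq\Omega(C/F)$ via Lemma~\ref{lemma 2-connected} and the chain $\Omega(C/F)>\Omega(F\ast\Omega B)>\Omega\Sigma F>F>\Sigma\Omega F$; and the piece $\Fib(\beta)$ via the identity $\gamma\beta=\eta$, where $\gamma\colon H\to\Omega\Sigma F$ is induced by $C/F\to\Sigma F$, which yields a fibration $\Omega\Fib(\gamma)\to\Fib(\beta)\to\Fib(\eta)$. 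Here $\Fib(\eta)$ is indeed controlled by Lemma~\ref{lemma loop-suspension}, as you say, but the term $\Fib(\gamma)$ requires a separate application of Proposition~\ref{prop cof} to the push-out square $C\to\Delta[0]$, $C/F\to\Sigma F$. Your sketch omits both the $\Omega(C/F)$ contribution and the $\Fib(\gamma)$ term.

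Two smaller but real problems. First, from connectedness of $\mathcal T$ you only deduce that $F$ is connected, but the argument (in particular the smash decomposition $\OmegaB F\ast\OmegaB F\simeq\Sigma(\OmegaB F\wedge\OmegaB F)$ and the ensuing inequality $\OmegaB F\ast\OmegaB F>\Sigma\OmegaB F$) needs $\OmegaB F$ to be connected, i.e.\ $F$ simply connected; the complementary case, where $\OmegaB F$ is empty or disconnected, is not vacuous and is disposed of by noting that $S^0$, respectively $S^1$, is then a retract of $\Sigma\OmegaB F$, so that connectedness of the spaces in $\mathcal T$ already gives the claim (see Example~\ref{ex connectivitiy}). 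Second, your justification of $\OmegaB F\ast\OmegaB F>\Sigma\OmegaB F$ by comparing connectivities ($1$-connected versus $0$-connected) is not a valid argument for an acyclic inequality: higher connectivity of $X$ never implies $X>Y$ on its own. The correct reason is that $\OmegaB F\wedge\OmegaB F>\OmegaB F$ when $\OmegaB F$ is connected, and suspension preserves the relation.
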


\begin{proof}
If  $\OmegaB F$ is not connected, then it is either empty or contains $S^0$ as a retract. 
In the first case,  $\Sigma \OmegaB F=S^0$ and ${\mathcal T}>\Sigma \OmegaB F$ is clear, 
as any space in $\mathcal T$ is connected and hence non-empty.  If $S^0$ is a retract of $\Omega F$, then $S^1$ is a retract of $\Sigma \Omega F$ 
and hence ${\mathcal T}>\Sigma \Omega F$
follows from the assumption that all the spaces in  ${\mathcal T}$  are connected (see Example~\ref{ex connectivitiy}.(c)).

Let us  assume that $\Omega F$ is connected and that, therefore, $F$ is $1$-connected. This implies that  $A$ is connected.
Moreover, according to Proposition~\ref{prop fibersinpushout}, $C>F$ and so $C$ is   $1$-connected. 
By Lemma~\ref{lemma 2-connected}, we also know  $C/F> S^3$. 
The total fiber set ${\mathcal T}$ here consists of a single space $T$, which is equivalent to the homotopy fiber of the map 
$\alpha\colon F \rightarrow C$ by Lemma~\ref{lemma total}.
This map fits into the following commutative diagram:
\[\xymatrix@C=.9pc@R=.9pc{
& & & &C\ar[ddddd]|(.36)\hole|(.52)\hole \ar[rrrrrd]^-{\text{id}}\\
F\ar[rrrrrd]^(.6){\text{id}} \ar[rrrru]^{\alpha}\ar[ddddd] \ar[rd]|-\beta  & & & & &&  & & & \Delta[0]\ar[ddddd]\\
& H\ar[rrruu]|(.3)\hole\ar[ldddd]\ar[rrrrrd]^(.3){\gamma}|(.74)\hole  & & & & F \ar[rrrru]\ar[rd]|-{\eta}\ar[ddddd]\\
& & & & &&  \Omega\Sigma F \ar[rrruu]\ar[ldddd]\\
\\
& & & &C/F\ar[rrrrrd]|(.245)\hole|(.36)\hole \\
CF\ar[rrrrrd]\ar[rrrru]& & & & &&  & & & \Sigma F \\
& & & & &CF\ar[rrrru] &&&&&,
}\]
where:
\begin{itemize}
\item  the faces $\begin{gathered}\xymatrix@C=1.5pc@R=1.5pc{F\ar[r]^{\alpha}\ar[d] &C\ar[d]\\ CF\ar[r] & C/F}\end{gathered}$ and
$\begin{gathered}\xymatrix@C=1.5pc@R=1.5pc{F\ar[r]\ar[d] &\Delta[0]\ar[d]\\ CF\ar[r] & \Sigma F}\end{gathered}$
are homotopy push-outs;
\item  the faces $\begin{gathered}\xymatrix@C=1.5pc@R=1.5pc{H\ar[r]\ar[d] &C\ar[d]\\ CF\ar[r] & C/F}\end{gathered}$ and
$\begin{gathered}\xymatrix@C=1.5pc@R=1.5pc{\Omega\Sigma F\ar[r]\ar[d] &\Delta[0]\ar[d]\\ CF\ar[r] & \Sigma F}\end{gathered}$
are homotopy pull-backs.
\end{itemize}
In this way, we expressed $\alpha\colon F \rightarrow C$ as a composition of $\beta\colon F\to H$ and
$H\to C$, which  gives (see~\ref{lemma closure}):
\[
T =\Fib(\alpha\colon F \rightarrow C)>\Fib(\beta\colon F\to H)\cup \Fib(H\to C).
\]
To prove the proposition, it is then   enough to show that both fiber sets $ \Fib(H\to C)$ and 
$\Fib(\beta\colon F\to H)$ are killed by  $\Sigma\Omega F$.  That is what we are going to do.

We start with $\Fib(H\to C)$. 
Note that we have the following  sequence of relations:
\[
\Fib(H\to C)\stackrel{(a)}{\simeq} \Omega (C/F)\stackrel{(b)}{>} \Omega (F \ast \Omega B)\stackrel{(c)}{>}\Omega\Sigma F
\stackrel{(d)}{>} F\stackrel{(e)}{>}\Sigma \Omega F,
\]
where the weak equivalence $(a)$ is a consequence of the fact that the relevant square is a homotopy pull-back;
 the inequality $(b)$ follows from Lemma~\ref{lemma 2-connected}; connectedness of $B$ gives $(c)$;
Proposition~\ref{prop fibersinpushout} gives $(d)$; and finally $(e)$ is a consequence of the fact that $F$ is connected (see for example~\cite[Corollary 3.5]{MR1464865}).

It remains to show that $\Fib(\beta\colon F\to H)>\Sigma \Omega F$. 
The space $H$ is  the homotopy fiber of the cofiber  map $C\to C/F$ and hence  $H>F$ (see Proposition~\ref{prop fibersinpushout}).   
Thus $H$ is also $1$-connected and consequently, $\Fib(\gamma\colon H\to \Omega\Sigma F)$
is a  connected space.  According to the diagram above, the composition of $\beta\colon F\to H$ and 
$\gamma\colon H\to  \Omega \Sigma F$ is the James map $\eta\colon F \to \Omega \Sigma F$. 
The fibers of these three maps therefore fit into a fibration sequence
\[
\Fib(\beta\colon F\to H) \to \Fib(\eta\colon F \to \Omega \Sigma F) \to
\Fib(\gamma\colon H \to \Omega \Sigma F).
\]
We just have argued that the base in this fibration is connected. As $F$ is $1$-connected, so is the total space in this fibration.  
We can therefore form a new fibration sequence
\[\Omega \Fib(\gamma\colon H \to \Omega \Sigma F)\to 
\Fib(\beta\colon F\to H) \to \Fib(\eta\colon F \to \Omega \Sigma F).
\]
By Lemma~\ref{lemma loop-suspension}, $\Fib(\eta\colon F \to \Omega \Sigma F)>\Omega F *
\Omega F \simeq \Sigma (\Omega F \wedge \Omega F)$. Since $\Omega F$ is connected,
$ \Sigma (\Omega F \wedge \Omega F)>\Sigma \Omega F$. These two inequalities give
an   estimation for the base of the above fibration sequence:
$\Fib(\eta\colon F \to \Omega \Sigma F)>\Sigma \Omega F$. The desired inequality would then follow,
once we show $\Omega \Fib(\gamma\colon H \to \Omega \Sigma F)>\Sigma \Omega F$. 

Note that $\Fib(\gamma\colon H \to \Omega \Sigma F)$ is the total fiber of  the homotopy push-out square
\[
\xymatrix@C=1.5pc@R=1.5pc{
C \ar[r]^{} \ar[d] & \Delta[0] \ar[d]\\
C/F \ar[r]  & \Sigma F.}
\]
By  Proposition~\ref{prop cof}, this fiber is killed by
$\bigl\{\Omega H * \Omega H, \Omega(H * \Omega(C/F))\bigr\}$. Recall that $H>F>S^2$ and 
$C/F > S^3$ (see Lemma~\ref{lemma 2-connected}).
These inequalities  imply
\[
 \Fib(\gamma\colon H \rightarrow \Omega \Sigma F) > \bigl\{\Omega F * \Omega F,\,
 \Omega(F* S^2)\bigr\} > \{\Sigma^2 \Omega F,\, \Omega \Sigma^3 F\}.
\]
Since $\Omega \Sigma^3 F>\Sigma^2 \Omega F$, we obtain $\Fib(\gamma\colon H \rightarrow \Omega \Sigma F)>\Sigma^2 \Omega F$.
By looping this inequality, we finally get $\Omega \Fib(\gamma\colon H \rightarrow \Omega \Sigma F)>\Omega \Sigma^2 \Omega F>\Sigma \Omega F$. 
\end{proof}

Compared to the classical Blakers-Massey Theorem, the previous result might seem too strong. 
This is because our claim at the beginning of his section --~that we would  use only one fiber~--
was not entirely honest. We have used the fiber~$G$ implicitly in assuming that~$B$ is connected 
(implying that so is~$G$), which allowed us to pick up a suspension for the inequality $\mathcal{T} > \Sigma\Omega F$.
For a non-connected~$B$, one can only establish $\mathcal{T} > \Omega \Fibset(f)$, as the following example shows.

\begin{example}
Let $n\geq 0$ and $x\colon\Delta[0]\to S^n$ be a base point. 
 Consider the following homotopy push-out square
 \[ 
 \xymatrix@C=1.5pc@R=1.5pc{ S^n\coprod\Delta[0] \ar[r] \ar[d] &\Delta[0]\coprod\Delta[0] \ar[d] \\ S^n \ar[r] & \Delta[0], } 
 \]
where the left vertical map is given by the identity on $S^n$ and $x$ on $\Delta[0]$ and the top horizontal map
is the coproduct of the unique maps into $\Delta[0]$. Thus the homotopy fiber set ${\mathcal F}$  of the top horizontal map
is equivalent to  $\{S^n,\Delta[0]\}$. The total fiber set $\mathcal{T}$ of  this square is however equivalent to $\{\Omega S^n, \Delta[0]\}$. 
Thus in this case it is  not true that $\mathcal{T}>\Sigma\Omega {\mathcal F}$, even though, for $n>2$, every total fiber in $\Tcal$ is connected.
\end{example}

\section{Connectivity of the total fiber}
\label{sec CI}
Before we proceed to the proof of the ``acyclic Blakers-Massey Theorem'', we first need to establish a relationship 
between the connectivity of the fibers of the maps in a homotopy push-out square and the connectivity of its total fiber 
in order to be able to use Proposition~\ref{prop rough}.

\begin{proposition}
\label{prop Tcon1}
Assume that in the homotopy push-out square~(\ref{maintheorem}) the spaces $B$, $C$,  $D$,  $F = \Fib(f)$ and $G = \Fib(g)$ are connected. 
Then the total fiber set~$\mathcal T$ of this square  consists of one space which is connected.
\end{proposition}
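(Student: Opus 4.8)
The plan is first to reduce to the case of a contractible terminal object and then to prove connectedness of the total fiber by a van Kampen argument. Since $D$ is connected, Proposition~\ref{prop reduction} lets us replace the square~(\ref{maintheorem}) by the homotopy push-out square~(\ref{pointhopushput}), whose total fiber set is equivalent to $\mathcal T$ and for which $\Fibset(f')$ is equivalent to $\{F\}$ and $\Fibset(g')$ to $\{G\}$. Applying Proposition~\ref{prop fibersinpushout} to the square~(\ref{pointhopushput}) gives $\Fib(k)>F$, and applying it to the transpose of~(\ref{pointhopushput}) gives $\Fib(h)>G$; since $F$ and $G$ are connected, i.e.\ $F,G>S^1$ (Example~\ref{ex connectivitiy}.(c)), both $\Fib(h)$ and $\Fib(k)$ are connected. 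By Lemma~\ref{lemma closure}.(a) applied to $f'$, the space $\Fib(hf)$ is killed by the set $\Fibset(f')\cup\{\Fib(h)\}$ of connected spaces, hence is connected as well. As the homotopy pull-back of $\Fib(h)\to\Delta[0]\leftarrow\Fib(k)$ is the connected product $\Fib(h)\times\Fib(k)$, the set $\mathcal T$ is equivalent to $\{T'\}$ where $T'=\Fib\bigl(\Fib(hf)\to\Fib(h)\times\Fib(k)\bigr)$, and it remains to see that $T'$ is connected.

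From the fibration sequence $T'\to\Fib(hf)\to\Fib(h)\times\Fib(k)$ together with $\pi_0\Fib(hf)=\ast$, the pointed set $\pi_0T'$ is trivial exactly when the homomorphism
\[
(f'_\ast,g'_\ast)\colon\pi_1\Fib(hf)\longrightarrow\pi_1\Fib(h)\times\pi_1\Fib(k)
\]
is surjective, so I would reduce to proving this surjectivity. The fibration sequences $\Fib(f')\to\Fib(hf)\xrightarrow{f'}\Fib(h)$ and $\Fib(g')\to\Fib(hf)\xrightarrow{g'}\Fib(k)$, whose fibers $\Fib(f')\simeq F$ and $\Fib(g')\simeq G$ are connected, force $f'_\ast$ and $g'_\ast$ to be individually surjective. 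Now van Kampen's theorem applied to the homotopy push-out~(\ref{pointhopushput}) of connected spaces yields $1=\pi_1\Delta[0]\cong\pi_1\Fib(h)\ast_{\pi_1\Fib(hf)}\pi_1\Fib(k)$, and since $f'_\ast$ and $g'_\ast$ are onto this amalgamated product is $\pi_1\Fib(hf)/(\ker f'_\ast\cdot\ker g'_\ast)$; hence $\ker f'_\ast\cdot\ker g'_\ast=\pi_1\Fib(hf)$. I would finish with the elementary observation that for normal subgroups $K,L$ of a group $\Pi$ with $KL=\Pi$ the map $\Pi\to\Pi/K\times\Pi/L$ is surjective --- given $x,y\in\Pi$, write $x^{-1}y=kl$ with $k\in K$, $l\in L$, and note that $xk$ maps to $(xK,yL)$ --- which applied with $\Pi=\pi_1\Fib(hf)$, $K=\ker f'_\ast$, $L=\ker g'_\ast$ delivers the required surjectivity; thus $\pi_0T'=\ast$.

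The step I expect to carry the weight is the reduction to a contractible $D$ via Proposition~\ref{prop reduction}. Without it one is forced to compare both $\pi_1$ and $\pi_2$ of all four spaces in~(\ref{maintheorem}) --- the homotopy pull-back $P$ acquires a $\pi_2D$-contribution that a naive application of van Kampen cannot detect --- whereas once the terminal object is contractible the homotopy pull-back in sight is an honest product, and the only non-formal ingredients left are van Kampen's theorem and a one-line piece of group theory.
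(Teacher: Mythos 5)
Your proof is correct and follows essentially the same route as the paper's: reduce to contractible $D$ via Proposition~\ref{prop reduction}, use the long exact homotopy sequence of $T \to A \to B\times C$ to reduce the claim to surjectivity of $(f_*,g_*)$ on $\pi_1$, and then combine the Seifert--van Kampen theorem with the elementary group-theoretic fact that $\Pi \to \Pi/K\times\Pi/L$ is onto when $K$, $L$ are normal with $KL=\Pi$. The only (welcome) difference is that you explicitly verify, via Proposition~\ref{prop fibersinpushout} and Lemma~\ref{lemma closure}, that the vertices of the reduced square stay connected, a point the paper leaves implicit.
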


\begin{proof}
The connectivity assumptions imply that the homotopy pull-back of the diagram $C\rightarrow D\leftarrow B$ is connected and hence  
the total fiber set $\mathcal T$ consists of one space $T$.  
Using Proposition \ref{prop reduction}, we assume, without loss of generality, that $D$ is contractible. Now, the maps 
$f_*\colon \pi_1(A)\to\pi_1(B)$ and $g_*\colon \pi_1(A)\to\pi_1(C)$ are surjective by connectedness of~$F$ and~$G$. 
Using the long exact homotopy sequence for $T \to A\to B\times C$, we need to show that 
$(f_*,g_*)\colon \pi_1(A) \to \pi_1(B)\times\pi_1(C)$ is surjective. But $\pi_1(B)\ast_{\pi_1(A)}\pi_1(C) \cong 1$ by the 
Seifert-van Kampen Theorem and the claim follows from the following lemma.
\end{proof}

We include the proof of the following group theoretical result, which is  probably well-known.

\begin{lemma}
Given a push-out diagram in the category of groups
 \[ 
 \xymatrix@C=1.5pc@R=1.5pc{ G \ar@{->>}[r]^{\phi} \ar@{->>}[d]_{\psi} & H \ar[d] \\ K \ar[r] & 1 } 
 \]
with~$\phi$ and~$\psi$ surjective, the homomorphism $(\phi,\psi)\colon G \to H\times K$ is surjective, too.
\end{lemma}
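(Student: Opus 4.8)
The plan is to prove this by a direct computation with the push-out diagram of groups, exploiting that the push-out is the trivial group. Recall that in the category of groups the push-out of $K \twoheadleftarrow G \twoheadrightarrow H$ is the amalgamated free product $H \ast_G K$; saying this push-out is $1$ means precisely that $H \ast_G K \cong 1$. First I would observe that since $\psi\colon G\to K$ is surjective, the subgroup $\phi(\ker\psi)\subseteq H$ is normal: for any $g\in G$ and $n\in\ker\psi$, conjugation gives $\phi(g)\phi(n)\phi(g)^{-1}=\phi(gng^{-1})$, and $gng^{-1}\in\ker\psi$ because $\psi(gng^{-1})=\psi(g)\cdot 1\cdot\psi(g)^{-1}=1$; surjectivity of $\phi$ then shows $\phi(\ker\psi)$ is normalized by all of $H$. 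Symmetrically $\psi(\ker\phi)\trianglelefteq K$.

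The key step is to identify these normal subgroups using the triviality of the amalgam. The amalgamated product $H\ast_G K$ can be described as $(H\ast K)$ modulo the relations $\phi(g)=\psi(g)$ for $g\in G$; since $\phi,\psi$ are surjective, this quotient is generated by the image of $H$ alone (every generator coming from $K$ equals some $\psi(g)=\phi(g)$), so $H\ast_G K$ is a quotient of $H$ — concretely, $H\ast_G K\cong H/N$ where $N$ is the normal closure in $H$ of $\{\phi(g)\phi'(g)^{-1}\}$; unwinding, $N=\phi(\ker\psi)$ once one uses that $\phi$ is onto. Hence $H\ast_G K\cong 1$ forces $\phi(\ker\psi)=H$, and symmetrically $\psi(\ker\phi)=K$. (Alternatively, and perhaps more cleanly, one can avoid normal-forms in the amalgam and argue directly: the quotient $H/\phi(\ker\psi)$ receives compatible maps making it a cocone under the span, hence a quotient of $H\ast_G K=1$, so it is trivial, i.e.\ $\phi(\ker\psi)=H$.)

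Given $\phi(\ker\psi)=H$ and $\psi(\ker\phi)=K$, surjectivity of $(\phi,\psi)$ is immediate: for an arbitrary $(h,k)\in H\times K$, pick $a\in\ker\psi$ with $\phi(a)=h$ and $b\in\ker\phi$ with $\psi(b)=k$; then $\phi(ab)=\phi(a)\phi(b)=h\cdot 1=h$ and $\psi(ab)=\psi(a)\psi(b)=1\cdot k=k$, so $(\phi,\psi)(ab)=(h,k)$.

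I expect the only real subtlety to be the middle step — correctly identifying $\ker(H\ast_G K\twoheadleftarrow H)$ with $\phi(\ker\psi)$ — and the cleanest way around it is the cocone argument: show $H/\phi(\ker\psi)$ together with the zero map from $K$ forms a cocone under $K\twoheadleftarrow G\twoheadrightarrow H$ (this uses exactly that $\psi$ is onto, so $\phi(G)$ and $\phi(\ker\psi)$ together generate $H$, making the composite $G\to H\to H/\phi(\ker\psi)$ factor through $K$), whence by the universal property it is a quotient of the push-out $1$. Everything else is routine group theory and needs no connectivity or topology.
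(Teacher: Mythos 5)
Your proof is correct and follows essentially the same route as the paper's: both reduce the hypothesis $H\ast_G K\cong 1$ to the statement that $\ker\psi$ surjects onto $H$ under $\phi$ and $\ker\phi$ surjects onto $K$ under $\psi$ (the paper phrases this inside $G$ as ``the normal closure of $\ker\phi\cup\ker\psi$ is $G$'', hence $\ker\phi\cdot\ker\psi=G$, plus the second isomorphism theorem, while you phrase it via the identification $H\ast_G K\cong H/\phi(\ker\psi)$), and both finish with the identical computation $(\phi,\psi)(ab)=(h,k)$ for $a\in\ker\psi$, $b\in\ker\phi$. One small wording slip: the map $K\to H/\phi(\ker\psi)$ in your cocone is the map induced by factoring the composite $G\to H\to H/\phi(\ker\psi)$ through $\psi$, not the zero map a priori (asserting it is zero up front is exactly the conclusion you are after); your parenthetical already describes the correct construction, so nothing breaks.
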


\begin{proof}
Writing $M = \Ker\phi$, $N = \Ker\psi$ and identifying $H \cong G/M$, $K\cong G/N$, we can reformulate 
the hypothesis $H\ast_GK \cong G/(M\mathbin{\raisebox{.35ex}{$\bigtriangledown$}} N) \cong 1$ as 
$M\mathbin{\raisebox{.35ex}{$\bigtriangledown$}} N = G$, where $M\mathbin{\raisebox{.35ex}{$\bigtriangledown$}} N$ 
is the normal closure of~$M\cup N$ in $G$. But~$M$ and~$N$ are normal subgroups and so 
$G = M\mathbin{\raisebox{.35ex}{$\bigtriangledown$}} N = MN$. By the second isomorphism theorem then 
$G/M = (MN)/M \cong N/(M\cap N)$ and $G/N \cong M/(M\cap N)$, so that both $\phi\colon N \to G/M \cong H$ 
and $\psi\colon M \to G/N \cong K$ are surjective. Finally, this implies the surjectivity of 
$(\phi,\psi)\colon G \to H\times K$ because if $(h,k)\in H\times K$, we find $n\in N$, $m\in M$ such that 
$\phi(n) = h$, $\psi(m) = k$ and thus $(\phi,\psi)(nm) = (h,1)(1,k) = (h,k)$.
\end{proof}

\section{The proof of Theorem~\ref{thm BMsquare}}
\label{sec maintheorem}
This section is devoted to the proof of Theorem~\ref{thm BMsquare}. 
\setcounter{equation}{0}
Recall that we are investigating the total fiber of a homotopy push-out square:
\begin{equation}\vcenter{\hbox{$\xymatrix@C=1.5pc@R=1.5pc{
 A \ar[r]^{f} \ar[d]_{g} & B \ar[d]^{h}\\
 C \ar[r]^{k} & D }$}}
\end{equation}
The first part consists in reducing the proof to the easier situation when the homotopy fibers of $f$ and $g$ are connected.
Just as in Section~\ref{sec horfibers} we invite the reader to skip this point on first reading.

\begin{point}
{\bf Reduction to connected fibers.}
If $D$  is empty, then so are $A$, $B$, $C$ and  the statement of the theorem is trivially true.
Assume then that $D$ is non-empty. By using the same argument as in the  first part of the proof of 
Proposition~\ref{prop fibersinpushout}, we can reduce the proof of the theorem to the case where $D$ is connected. 
A further reduction can be then obtained by using Proposition~\ref{prop reduction}, which states that the general case for a
connected $D$ follows from the case $D=\Delta[0]$. Let us then make this assumption $D=\Delta[0]$.
This implies that both of the functions  $\pi_0f\colon\pi_0A\to \pi_0B$ and  $\pi_0g\colon\pi_0A\to \pi_0C$
are surjective.

If both sets  $\Fibset(f)$ and $\Fibset(g)$ contain a non-connected space, then according to our convention, the acyclic classes 
$\overline{\Ccal(\OmegaB \Fibset(f))}$ and $\overline{\Ccal(\OmegaB \Fibset(g))}$  consist of all spaces and 
hence so does $\overline{\Ccal(\OmegaB \Fibset(f)\ast \OmegaB \Fibset(g))}$. It is then clear that the total fiber set
belongs to this acyclic class: ${\mathcal T} > \OmegaB \Fibset(f)\ast \OmegaB \Fibset(g)$.

We can then assume that the set $\Fibset(f)$   consists of connected spaces. 
Assume further that $\Fibset(g)$ contains at least one non-connected space. This implies 
that $M>M\ast \OmegaB\Fibset(g)$ for any set of spaces $M$.  
Since all the spaces in $\Fibset(f)$  are connected, the function 
$\pi_0f$ is a bijection.  Consequently, as  an easy $H_0(-,{\mathbf Z})$ calculation shows, the space  $C$ has to be connected for $D$ to be connected. 
Thus, for any space $T_0$ in $\mathcal T$, there is a space $F_0$ in $\Fibset(f)$ that fits into a fibration sequence $T_0\to F_0\to C$.
This implies  that $\Fib(T_0\to F_0)$ is equivalent to $\Omega C$.   Recall  that by Proposition~\ref{prop fibersinpushout},  $C>\Fibset(f)$, which implies 
 $\Omega C>\Omega \Fibset(f)$.  From Lemma~\ref{lemma closure}, we then obtain:
\[
T_0> \{\Omega C, F_0\}>\Omega \Fibset(f)\cup \{F_0\}>\Omega \Fibset(f)>\Omega \Fibset(f)\ast \Omega \Fibset(g)
\]
As this happens for any $T_0$ in $\mathcal T$, we get the desired inequality $\mathcal T>\Omega \Fibset(f)\ast \Omega \Fibset(g)$.
\end{point}

\begin{point}
{\bf Connected fibers.}
The remaining case is when both $\Fibset(f)$  and $\Fibset(g)$ consists of connected spaces. This implies that both
$B$ and $C$ are connected which has several consequences. One is that  both $\Fibset(f)$  and $\Fibset(g)$ are equivalent to
sets containing only one space and as before we  use the symbol $F\to A$ to denote the homotopy fiber map of $f$ and
$G\to A$  to denote the homotopy fiber map of $g$.
A second consequence is that by  Proposition~\ref{prop Tcon1} the total fiber set $\mathcal T$ consists also of a single 
connected space~$T$. Lastly   $A$ has to be  connected too.
To get an estimate for $T$, which is the homotopy fiber of the map $\alpha\colon F\to C$ by Lemma~\ref{lemma total},  
we consider  the following commutative diagram (compare with the proof of Proposition~\ref{prop rough}),
\[\xymatrix@C=0.7pc@R=.7pc{
F\ar[rrr]^-{\alpha}\ar[ddd]\ar[dr]_{\beta} & & & C\ar[ddd]\\
& H\ar[urr]\ar[ddl]\\
\\
CF\ar[rrr] & & & C/F
}\]
where the outside square 
is a homotopy push-out square and the inside square is a homotopy pull-back square.
We  analyze $T$ as the homotopy fiber
of the composite $F\xrightarrow{\beta} H \rightarrow C$,  which gives:
\[T>\Fibset(\beta\colon F
\rightarrow H)\cup \Fibset(H\to C).\]

The homotopy fiber set $\Fibset(\beta)$ is the total fiber set of the outside homotopy push-out square.
Since  $C$ is connected,  $\Fibset(\beta) > \bigl\{\Omega T * \Omega T, \Omega(T * \Omega C)\bigr\}$ (see Proposition~\ref{prop cof}).
We can then use the rough estimates from
Proposition~\ref{prop rough} (with respect to both $F$ and $G$) and the fact that
$C>F$ to  obtain
\[
 \Fibset(\beta) > \bigl\{\Omega \Sigma \Omega F * \Omega \Sigma \Omega G, \Omega
 (\Sigma \Omega G * \Omega F)\bigr\} > \Omega F * \Omega G,
\]
where we used the fact that $\Omega \Sigma X > X$ for any space $X$ (see Lemma \ref{prop fibersinpushout}).

Since $\Fib(H \rightarrow C) = \Omega(C/F)$ and $C/F>\Omega(F * \Omega B) > \Omega(F * \Omega G) > \Omega F * \Omega G$ by Lemma~\ref{lemma 2-connected}, we can conclude $\Fib(H \rightarrow C)>\Omega F * \Omega G$.
\end{point}

\section{Proof of Theorem~\ref{main theoremBM}}
From the case of a homotopy push-out square, we easily deduce the statement for an arbitrary square.
Recall that $\mu\colon Q \rightarrow D$ denotes the comparison map between the homotopy push-out and
the terminal object of the commutative square.

If necessary by modifying relevant maps into cofibrations and fibrations, 
the square from Theorem~\ref{main theoremBM} can be fitted into the following commutative diagram
\[\xymatrix@C=.9pc@R=.9pc{
& & & &B\ar[ddddd]|(.365)\hole|(.53)\hole \ar[rrrrrd]^-{\text{id}}\\
A\ar[rrrrrd]^(.6){\text{id}} \ar[rrrru]^{f}\ar[ddddd]_{g} \ar[rd]|(.4){\beta}  & & & & &&  & & & B\ar[ddddd]\\
& P_2\ar[rrruu]|(.3)\hole\ar[ldddd]\ar[rrrrrd]^(.3){\gamma}|(.77)\hole  & & & & A \ar[rrrru]^{f}\ar[rd]|(.4){\alpha}\ar[ddddd]_(.4){g}\\
& & & & &&  P_1 \ar[rrruu]\ar[ldddd]\\
\\
& & & &Q\ar[rrrrrd]|(.225)\hole|(.325)\hole^(.6)\mu \\
C\ar[rrrrrd]^-{\text{id}}\ar[rrrru]& & & & &&  & & & D\\
& & & & &C\ar[rrrru] &&&&&,
}\]
where
\begin{itemize}
\item the face $\begin{gathered}\xymatrix@C=1.5pc@R=1.5pc{A\ar[r]^{f}\ar[d]_{g} &B\ar[d]\\ C\ar[r] & Q}\end{gathered}$ 
is a homotopy push-out square, while
\item the squares $\begin{gathered}\xymatrix@C=1.5pc@R=1.5pc{P_1\ar[r]\ar[d] &B\ar[d]\\ C\ar[r] &D}\end{gathered}$ and
$\begin{gathered}\xymatrix@C=1.5pc@R=1.5pc{P_2\ar[r]\ar[d] &B\ar[d]\\ C\ar[r] & Q}\end{gathered}$
are homotopy pull-back squares.
\end{itemize}
The total fiber set $\mathcal T$ of the square we are interested in is given by $\Fibset(\alpha\colon A\to P_1)$. 
The map $\alpha\colon A\to P_1$ is expressed as a  composition of  $\beta\colon A\to P_2$ and
$\gamma\colon P_2\to P_1$. We therefore get the inequality
\[
\mathcal T=\Fibset(\alpha\colon A\to P_1)>\Fibset(\beta\colon A\to P_2)\cup \Fibset(\gamma\colon P_2\to P_1).
\]
According to Theorem~\ref{thm BMsquare}, $\Fibset(\beta\colon A\to P_2)>\OmegaB \Fibset(f)\ast\OmegaB \Fibset(g)$.
The spaces in the fiber set $\Fibset(\gamma\colon P_2\to P_1)$ are among the spaces in $\OmegaB \Fibset(\mu\colon Q\to D)$
and since  $\Fibset(\mu\colon Q\to D)$ is the push-out fiber set $\mathcal R$ of the square in the theorem, we get 
$\Fibset(\gamma\colon P_2\to P_1)>\OmegaB \mathcal R$. These two inequalities give the inequality stated in the theorem.

\bibliographystyle{amsplain}
\providecommand{\bysame}{\leavevmode\hbox to3em{\hrulefill}\thinspace}
\providecommand{\MR}{\relax\ifhmode\unskip\space\fi MR }
\providecommand{\MRhref}[2]{%
 \href{http://www.ams.org/mathscinet-getitem?mr=#1}{#2}
}
\providecommand{\href}[2]{#2}



\end{document}